\newtheorem{thm}{Theorem}[section]
\theoremstyle{definition}
\newtheorem{definition}[thm]{Definition}
\theoremstyle{remark}
\newtheorem{remark}[thm]{Remark}
\renewcommand{\overline}[1]{\langle #1\rangle}
\let\c@equation\c@thm
\numberwithin{equation}{section}
\title{Why is the category of near-vector spaces abelian?} 
\begin{document}
\begin{abstract} In this paper we present a unified proof of the fact that the category of modules over a ring and the category of near-vector spaces in the sense of J.~Andr\'e, over an appropriate scalar system (a `scalar group'), are both abelian categories. The unification is possible by viewing each of these categories as subcategories of the (abelian) category of modules over a multiplicative monoid $M$. Although in the case of near-vector spaces all elements of $M$ except one (the `zero' element) are invertible, we show that this requirement is not necessary for the corresponding category to be abelian in analogy to the well-known fact that modules over a ring form an abelian category even if the ring is not a field (i.e., modules over it are not vector spaces).

\end{abstract}

\author{Zurab Janelidze}
\address{Department of Mathematical Sciences, Stellenbosch University, South Africa}\address{National Institute for Theoretical and Computational Sciences (NITheCS), South Africa}
\email{zurab@sun.ac.za}

\author{Sophie Marques}
\address{Department of Mathematical Sciences, Stellenbosch University, South Africa}\address{National Institute for Theoretical and Computational Sciences (NITheCS), South Africa}
\email{smarques@sun.ac.za}

\author{Daniella Moore}
\address{Department of Mathematical Sciences, Stellenbosch University, South Africa}
\email{dmoore@sun.ac.za}

\makeatletter
\@namedef{subjclassname@2020}{
  \textup{2020} Mathematics Subject Classification}
\makeatother

\subjclass[2020]{16D90, 12K05, 16Y30, 20M30, 15A03, 18E10, 08C05}









\keywords{Abelian category, module, monoid, monoid action, monoid ring, near-field, near-ring, near-vector space, pseudovariety, subspace}
\maketitle 

\newcommand{\sophie}[1]{{\color{red}\sf $\clubsuit\clubsuit\clubsuit$ Sophie: [#1]}}

\maketitle

\section{Introduction}\label{d1}

A `near-vector space' is a generalization of a vector space, introduced by J.~Andr\'e \cite{Andre}. These structures seem to mimic vector spaces in many ways, while at the same time encompassing interesting exotic examples (see \cite{Howell,Howell2, HM14, HS18, HM22, HR22, MM22, BMM25}). The starting point of our joint work was to explore whether near-vector spaces (over a fixed `scalar group') form an abelian category, just like the category of vector spaces (over a fixed field) do. Of course, behind the fact that the category of vector spaces is abelian is a more general fact that the category of modules over any ring is abelian. 
We were able to establish abelianness for the category of what we ended up calling in this paper \emph{Andr\'{e} modules} over a \emph{multi-near-ring}, new structures that are in a similar relation to modules over a ring, as near-vector spaces are to vector spaces over a field. 
In fact, the axioms for an Andr\'e module are chosen in such a way that abelianness of the corresponding category becomes almost an obvious fact. What is less obvious is the link between our axioms defining an Andr\'e module and Andr\'e's original axioms defining a near-vector space. This link relates to an open question whether the class of near-vector spaces is closed under subspaces, which was resolved in \cite{HM22, rabie, HR22, MM22}: it rests on Theorem~\ref{ThmA}, whose proof uses an argument that generalizes and slightly simplifies the argument from \cite{MM22} proving that a subspace of every near-vector space is itself a near-vector space.

The advantage of working in the more general context of Andr\'e modules is that it enables to establish the following answer to the question in the title: \emph{for the same reason as the category of modules over a ring is an abelian category}. Andr\'e modules are special types of modules over a (multiplicative) monoid $M$ (i.e., homomorphic actions of $M$ on an abelian group). Each class of Andr\'e modules is determined by a set $\mathbf{N}$ of near-ring structures on $M$, whose underlying multiplicative structure is given by $M$ (the pair $(M,\mathbf{N})$ is what we call a \emph{multi-near-ring} referred to above). If the set $\mathbf{N}$ is given by a single ring structure, then Andr\'e modules become usual modules over the corresponding ring. To get near-vector spaces, we must include all possible near-ring structures in the set $\mathbf{N}$ and furthermore, require that every element of $M$ that is not an additive identity in these near-ring structures is invertible (on the other end, this last restriction would have given us precisely vector spaces over a field). Now, the fact that the class of Andr\'e modules over a fixed multi-near-ring $(M,\mathbf{N})$ forms an abelian category rests on the fact that this class is a pseudovariety of $M$-modules (Theorem~\ref{abeliancategory}): it is closed under finite products, submodules and homomorphic images inside the class of all modules over the monoid $M$.

\section{Preliminaries}

Given a monoid $M$, an \emph{$M$-module} (or, a \emph{module over} $M$) is a pair $(V,\cdot)$ consisting of an (additive) abelian group $V$ and monoid action $\cdot$ by endomorphisms (to be referred as \emph{$M$-action}). This means that $\cdot$ is a map $M\times V\to V$ such
that the following identities hold: 
\begin{equation}
\label{1Eq1}
(\alpha\beta)\cdot  v=\alpha\cdot (\beta\cdot v),\quad 1\cdot v=v, \quad \alpha \cdot (u+v)= \alpha \cdot u +\alpha \cdot v.
\end{equation}
\begin{remark}
\label{notation}
When it is clear from the context, we call the pair $(V,\cdot)$ simply $V$. Note that throughout the paper, $V$ will be used to denote $V$ as an additive group as well as an $M$-module.
\end{remark}
As a consequence of the last identity in (\ref{1Eq1}), we have: 
\begin{equation}
\label{1Eq3}
    \alpha\cdot 0=0, \quad \alpha \cdot (-v) = -(\alpha \cdot v).
    \end{equation}
In an $M$-module $V$, for a subset $S\subseteq V$, we write $M\cdot S$ to denote
\[M\cdot S=\{\alpha\cdot s\mid \alpha\in M,\; s\in S \},\]
and we write $\overline{S}$ to denote the smallest subgroup of $V$ containing $S$. A \emph{submodule} of an $M$-module $V$ is a non-empty subset $S$ of $V$ such that $S$ is a subgroup of $V$, closed under the $M$-action. The smallest submodule of $V$ containing a given subset $S\subseteq V$ is given by $\overline{M\cdot S}$. In particular, $\overline{M \cdot \emptyset} = \{0\}$. 

The notion of a module over a monoid is certainly not new. In fact, any module over a monoid $M$ can be see as a module over the monoid ring $\mathbb{Z}[M]$ (see \cite{saunders}). 


Modules over a monoid form a variety of universal algebras, and hence this class of algebras has all the usual properties of a variety. In particular, it admits (infinite) products and sums, and more generally, all categorical limits and colimits. In this paper we are interested in subclasses of this class of algebras, which are not necessarily subvarieties.

\begin{definition}
Given a monoid $M$, an $M$-module $V$ is said to have the \emph{free action property} when the following holds for all $\alpha,\beta\in M$ and $v\in V$:
\begin{itemize}
    \item[(FA)] $[[\alpha\neq\beta]\land[\alpha\cdot v=\beta\cdot v]]\Rightarrow [v=0]$.
\end{itemize}
\end{definition}

\medskip
\begin{definition}
A \emph{scalar group} is a monoid $M$ having elements $0$ and $-1_{M}$ such that every element of $M\backslash \{0\}$ is invertible and the following two properties hold:
\begin{itemize}
   \item[(S0)] $0\cdot \alpha=0=\alpha\cdot 0$ holds for any $\alpha\in M$.
    \item[(S1)] $-1_{M}$ is such that $\eta=1$ and $\eta=-1_{M}$ are the only solutions of $\eta^2=1$.
    \end{itemize}
\end{definition}

Given any monoid $M$ with $0$ satisfying (S0), such an element $0$ is unique. Furthermore, given any monoid $M$ with $-1_{M}$ satisfying (S1), such an element $-1_{M}$ is unique. Indeed, if $\eta^2=1$ has only one solution, then $-1_{M}=1$. If $\eta^2=1$ has two solutions, then $-1_{M}$ must be the solution different from $1$.

A (left unitary abelian) \emph{near-ring} is an (additive) abelian group $R$ equipped with a (multiplicative) monoid structure, such that the multiplication makes $R$ into an $R$-module. We write a near-ring as a triple $R=(R,+,\cdot)$, where $+$ is the binary operation of the additive structure of $R$, and $\cdot$ is the binary operation of the multiplicative structure of $R$.  A \emph{near-field} is a near-ring $(F,+,\cdot)$ with $(F\backslash \{0\}, \cdot)$ being a group.

It is not difficult to see that in a near-ring with an underlying monoid having elements $0$ and $-1_{M}$ satisfying (S0) and (S1), the element $0$ must be the additive identity of the near-ring. However, in general, $-1_{M}$ will not be the additive inverse $-1$ of $1$, unless $-1$ differs from $1$. In the latter case, we must have $(-1)\cdot (-1)=-((-1)\cdot 1)=-(-1)=1$, and so $-1$ is the non-identity solution of $\eta^2=1$.  In a near-ring where $1=-1$, we have $a=a\cdot 1=a\cdot (-1)=-(a\cdot 1)=-a$. The following theorem provides an example where $-1 \neq -1_{M}$.

\begin{thm}
For a ring $R$ and a natural number $n$, consider the following binary operation on $R\times R$: 
$$(a,b)\#(c,d)=(ac,bc+a^nd).$$
The law $(ac)^n=a^nc^n$ holds in $R$ if and only if $(R\times R,+,\#)$ is a near-ring, where the addition $+$ is defined component-wise using the addition $+$ of the ring $R$. When $(R\times R,+,\#)$ is a near-ring, $(R\times R,+,\#)$ is a ring if and only if the law $(a+c)^n=a^n+c^n$ holds in $R$.
Finally, when $R$ is the two-element field $R=\mathbb{Z}_2$ and $n=1$, we get a ring whose underlying multiplicative monoid $M$ has elements $0$ and $-1_{M}$ satisfying (S0) and (S1), but $-1_{M}$ is not the additive inverse of the identity element of the ring.  
\end{thm}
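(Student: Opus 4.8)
The plan is to check the near-ring and ring axioms directly against the given formula, isolating at each stage exactly which axiom forces the stated polynomial identity. Since the addition on $R\times R$ is component-wise, $(R\times R,+)$ is automatically an abelian group, so all the content lies in the multiplicative operation $\#$ and its interaction with $+$. Throughout I would work over a general (possibly noncommutative) ring $R$, so that $a^nc^n$ is read as the product of $n$ copies of $a$ followed by $n$ copies of $c$.

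First I would dispose of the axioms that hold unconditionally. A short computation shows that $(1,0)$ is a two-sided identity for $\#$, since $(1,0)\#(c,d)=(c,d)$ and $(a,b)\#(1,0)=(a,b)$ for every $n$. Expanding both $(a,b)\#((c,d)+(e,f))$ and $(a,b)\#(c,d)+(a,b)\#(e,f)$ and using distributivity in $R$, one sees they agree, so the left distributive law always holds. Thus the only near-ring axiom that can fail is associativity of $\#$. Computing the two triple products, $((a,b)\#(c,d))\#(e,f)$ has second coordinate $bce+a^nde+(ac)^nf$, while $(a,b)\#((c,d)\#(e,f))$ has second coordinate $bce+a^nde+a^nc^nf$, the first coordinates being $ace$ in both cases. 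Hence associativity is equivalent to $(ac)^nf=a^nc^nf$ for all $a,c,f$; taking $f=1$ yields the law $(ac)^n=a^nc^n$, and conversely that law makes the two expressions identical. This proves the first assertion.

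For the ring statement I would observe that the only axiom separating a near-ring from a ring is the right distributive law, the left one being already available. Expanding $((c,d)+(e,f))\#(a,b)$ against $(c,d)\#(a,b)+(e,f)\#(a,b)$ and cancelling the terms guaranteed by the distributive and additive-commutativity laws already present in $R$, the requirement reduces to $(c+e)^nb=(c^n+e^n)b$ for all $b$; setting $b=1$ gives $(a+c)^n=a^n+c^n$, and conversely this law suffices. This proves the second assertion.

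Finally, specialising to $R=\mathbb{Z}_2$ and $n=1$, both laws $(ac)^1=a^1c^1$ and $(a+c)^1=a^1+c^1$ are trivially valid, so $(R\times R,+,\#)$ is a ring with $(a,b)\#(c,d)=(ac,bc+ad)$. I would then identify the distinguished elements: the additive identity $(0,0)$ satisfies (S0), since $(0,0)\#(c,d)=(0,0)=(a,b)\#(0,0)$. To verify (S1) I compute $\eta^2$ for $\eta=(a,b)$, obtaining $(a^2,2ab)=(a,0)$ because $\mathbb{Z}_2$ has characteristic $2$ and $a^2=a$; hence $\eta^2=(1,0)$ exactly when $a=1$, giving the two solutions $(1,0)$ and $(1,1)$, so (S1) holds with $-1_M=(1,1)$. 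Since the additive inverse of the identity $(1,0)$ is $-(1,0)=(1,0)$ in characteristic $2$, it differs from $-1_M=(1,1)$, as claimed. I do not expect a serious obstacle: the proof is a sequence of direct verifications, and the only point requiring genuine care is keeping track, in the noncommutative setting, of which second-coordinate terms cancel by the laws already holding in $R$, so that each polynomial identity emerges cleanly upon setting the remaining free coordinate to $1$.
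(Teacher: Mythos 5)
Your proof is correct and takes essentially the same route as the paper's: a direct verification that (given the one distributive law, which holds unconditionally) associativity of $\#$ is equivalent to $(ac)^n=a^nc^n$ and the remaining distributive law is equivalent to $(a+c)^n=a^n+c^n$, followed by the explicit $R=\mathbb{Z}_2$, $n=1$ computation; the paper extracts the identities from specific witnesses such as $(a,0)\#((c,0)\#(0,1))$, while you compute the general products and set the free coordinate to $1$, which is the same argument. If anything, your check of (S1) via $\eta^2=(a^2,2ab)=(a,0)$ for every $\eta=(a,b)$ is slightly more complete than the paper's, which exhibits the two solutions $(1,0)$ and $(1,1)$ (and the non-solution $(0,1)$) without spelling out that no other element squares to the identity.
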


\begin{proof}
Suppose $(ac)^{n} = a^{n}c^{n}$. First, we prove that the multiplication is associative.
\begin{align*}
(a,b)\#((c,d)\#(e,f)) &=(a,b)\#(ce,de+c^nf)\\
&=(ace,bce+a^nde+a^nc^nf)\\
&=(ac,bc+a^nd)\#(e,f)\\
&=((a,b)\#(c,d))\#(e,f)
\end{align*}
It is easy to see that the multiplicative identity is given by $(1,0)$. This proves that $(R \times R, \#)$ is a monoid. It remains to verify the right distributivity law:
\begin{align*}
(a,b)\#((c,d)+(e,f)) &=(a,b)\#(c+e,d+f)\\
&=(ac+ae,bc+be+a^nd+a^nf)\\
&=(ac+ae,bc+a^nd+be+a^nf)\\
&=(ac,bc+a^nd)+(ae,be+a^nf)\\
&=(a,b)\#(c,d)+(a,b)\#(e,f).
\end{align*}
Conversely, suppose $(R\times R, +, \#)$ is a near-ring.
Associativity of multiplication forces the law $(ac)^n=a^nc^n$:
\begin{align*}
(0,a^nc^n)&=(a,0)\#((c,0)\#(0,1))\\
&=((a,0)\#(c,0))\#(0,1)\\
&=(0,(ac)^n).
\end{align*}

Suppose $(R\times R,+,\#)$ is a near-ring. Then, under the assumption $(a+c)^n=a^n+c^n$, we have the left distributivity law:
\begin{align*}
((a,b)+(c,d))\#(e,f) &=(a+c,b+d)\#(e,f)\\
&=(ae+ce,be+de+a^nf+c^nf)\\
&=(ae+ce,be+a^nf+de+c^nf)\\
&=(ae,be+a^nf)+(ce,de+c^nf)\\
&=(a,b)\#(e,f)+(c,d)\#(e,f).
\end{align*}
Conversely, the left distributivity law forces $(a+c)^n=a^n+c^n$:
\begin{align*}
(0,(a+c)^n) &=((a,0)+(c,0))\#(0,1) \\
&=(a,0)\#(0,1)+(c,0)\#(0,1)\\
&=(0,a^n+c^n).
\end{align*}

Finally, consider the case when $R=\mathbb{Z}_2$ and $n=1$. Every element in $R\times R$ has additive order $2$. So, in particular, $1=(1,0)=-1$. However, we have $(1,1)\#(1,1)=(1,1+1)=(1,0)=1$ and $(0,1)\#(0,1)=(0,0)$. 
\end{proof}

\begin{definition}
An $M$-module $V$, where $M$ is a scalar group, is said to have the \emph{scalar action property} when the following holds for all $v\in V$:
\begin{itemize}
    \item[(SA)] $[0\cdot v=0]\land[(-1_M)\cdot v=-v]$.    
\end{itemize}
\end{definition}

A morphism of $M$-modules is defined as a morphism of underlying abelian groups that preserves the $M$-action. Morphisms can be composed in the usual way, giving rise to the category of $M$-modules, which we will denote by $M\textrm{-}\mathbf{Mod}$. This category is well known to be isomorphic to the categories of the following equivalent mathematical structures:
\begin{itemize}
    \item Functors from the monoid $M$ seen as a single-object category, to the category $\mathbf{Ab}$ of abelian groups. 
    
    \item Modules over the monoid ring $\mathbb{Z}[M]$.
\end{itemize}
This means, in particular, that each $M\textrm{-}\mathbf{Mod}$ is an abelian category.

 The properties (SA) and (FA) can be read as properties of a near-ring, where $0$ and $-1_M$ are interpreted, respectively, as the additive identity and the additive inverse of the multiplicative identity of the near-ring.

\begin{thm}
\label{LemA}
    Every near-field satisfies (FA). Every near-ring satisfying (FA) also satisfies (SA), where $-1$ is the additive inverse of $1$. Moreover, for every such near-ring, (S1) holds for $-1_{M} = -1$. Thus, in particular, the same is true for every near-field.
\end{thm}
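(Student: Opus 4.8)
The statement has four assertions, and the single structural fact driving all of them is that in any near-ring $R$ the left multiplication map $L_a\colon v\mapsto a\cdot v$ is an endomorphism of the additive group $(R,+)$: this is precisely the distributivity identity $a\cdot(u+v)=a\cdot u+a\cdot v$ from \eqref{1Eq1} that makes $R$ a module over its own multiplicative monoid. Consequently $a\cdot 0=0$ and $a\cdot(-v)=-(a\cdot v)$ as recorded in \eqref{1Eq3}, and in particular $0\cdot 0=L_0(0)=0$ and $(-1)\cdot(-1)=-((-1)\cdot 1)=-(-1)=1$, where $-1$ denotes the additive inverse of $1$. I would record these elementary identities first, since they feed every later step.

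For the first assertion, that a near-field satisfies (FA), suppose $\alpha\cdot v=\beta\cdot v$ with $v\neq 0$. Since $F\setminus\{0\}$ is a group under multiplication, $v$ is invertible, so multiplying on the right by $v^{-1}$ and using associativity of the monoid gives $\alpha=(\alpha\cdot v)\cdot v^{-1}=(\beta\cdot v)\cdot v^{-1}=\beta$; hence $\alpha\neq\beta$ forces $v=0$, which is (FA).

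Now let $R$ be any near-ring satisfying (FA). The device for exploiting (FA) is the observation that whenever $x\in R$ and a scalar $s\neq 1$ satisfy $s\cdot x=x$, the equality $s\cdot x=1\cdot x$ forces $x=0$. I would prove (SA) in two steps with this device. For $0\cdot v=0$: associativity and $0\cdot 0=0$ give $0\cdot(0\cdot v)=(0\cdot 0)\cdot v=0\cdot v$, so $x=0\cdot v$ is fixed by $L_0$ and the device (with $s=0$) yields $x=0$; if instead $0=1$ then $R$ is trivial and there is nothing to prove. For $(-1)\cdot v=-v$: using $(-1)\cdot(-1)=1$ and that $L_{-1}$ is a homomorphism, the element $x=(-1)\cdot v+v$ satisfies $(-1)\cdot x=((-1)\cdot(-1))\cdot v+(-1)\cdot v=v+(-1)\cdot v=x$, so the device (with $s=-1$) gives $x=0$, i.e. $(-1)\cdot v=-v$; the excluded case $-1=1$ already forces $a=-a$ for all $a$, making the identity automatic. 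For (S1) with $-1_M=-1$, both $1$ and $-1$ solve $\eta^2=1$ by the identities above; conversely, if $\eta^2=1$ then, writing $w=\eta-1$ and using left distributivity together with the identity $(-1)\cdot w=-w$ just proved, $\eta\cdot w=\eta^2-\eta=1-\eta=-w=(-1)\cdot w$, so (FA) applied to $\eta$ and $-1$ acting on $w$ forces $\eta=-1$ or $w=0$, that is $\eta\in\{1,-1\}$. Combining the parts, every near-field satisfies (FA), hence (SA) and (S1), which is the final assertion.

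The main obstacle is conceptual rather than computational: because these near-rings carry only left distributivity, one cannot manipulate $0\cdot v$ or $(-1)\cdot v$ by the familiar ring identities, and the entire argument depends on rewriting each target equation in the cancellable form $s\cdot x=1\cdot x$ with $s\neq 1$ so that (FA) applies. The only additional care needed is to separate the degenerate cases $0=1$ and $-1=1$, in which (FA) cannot be invoked but the conclusions hold for trivial reasons.
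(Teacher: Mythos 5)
Your proof is correct and takes essentially the same route as the paper's: right-cancellation by $v^{-1}$ for (FA) in a near-field, then rewriting each target equation in the form $s\cdot x=1\cdot x$ (with $s=0$ via $0\cdot(0\cdot v)=(0\cdot 0)\cdot v$, and $s=-1$ via $(-1)\cdot((-1)\cdot v+v)=(-1)\cdot v+v$) so that (FA) applies, and finally $\eta\cdot(\eta-1)=1-\eta=(-1)\cdot(\eta-1)$ for (S1), with the same treatment of the degenerate cases $0=1$ and $-1=1$. Isolating the ``fixed-point device'' and the identities $a\cdot 0=0$, $a\cdot(-v)=-(a\cdot v)$ upfront is only a presentational difference.
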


\begin{proof} Suppose $a\cdot c=b\cdot c$ in a near-field, with $a\neq b$. If $c\neq 0$, then $a=a\cdot c\cdot c^{-1}=b\cdot c\cdot c^{-1}=b$. So $c=0$, proving that (FA) holds.

Consider now an arbitrary near-ring. In the case when $0=1$, all elements of the near-ring are equal to each other and hence both (FA) and (SA) trivially hold. Suppose $0\neq 1$. Then, since \[0\cdot (0\cdot \alpha)=(0\cdot 0)\cdot\alpha=0\cdot\alpha=1\cdot(0\cdot\alpha),\] we get that $0\cdot \alpha=0$ by (FA). If $-1=1$, then $-\alpha=\alpha\cdot(-1)=\alpha$, which completes the proof of (SA). If $-1\neq 1$, then because of
\[(-1)\cdot ((-1)\cdot \alpha+\alpha)=\alpha+(-1)\cdot \alpha=1\cdot((-1)\cdot \alpha+\alpha),\]
we get that $(-1)\cdot \alpha+\alpha=0$ and so $(-1)\cdot \alpha=-\alpha$, which completes the proof of (SA). Finally, suppose $\eta^2=1$. Then $\eta\cdot(\eta-1)=1-\eta=(-1)\cdot (\eta-1)$. If $\eta\neq 1$ then $\eta-1\neq 0$ and so, by (FA) we get $\eta=-1$. Conversely, both $\eta=1$ and $\eta=-1$ satisfy $\eta^2=1$ (that $\eta=-1$ satisfies $\eta^2=1$ follows easily from already established (SA)).
\end{proof}

Standard examples of near-rings that are not rings (and moreover, do not satisfy (SA)) are obtained as follows. Consider an abelian group $M$ with an underlying set $X$. The opposite of the monoid of endofunctions of $X$, i.e., the set of endofunctions with multiplication defined by $f\cdot g=g\circ f$, can be extended to a near-ring by point-wise addition defined according to the abelian group $M$,
$$(f+g)(x)=f(x)+_A g(x).$$
We will denote this near-ring by $\mathrm{Fun}(A)$. 
See \cite{Loc21} for a modern account of the theory of near-rings (and the references there for older texts on near-rings) as well as a discussion of examples of near-fields that are not fields. 

\section{Andr\'e modules}

We recall the original definition of a near-vector space \cite[Definition 4.1]{Andre}: 
\begin{definition}
\label{AndreNVS}
A pair $(V,F)$ is called a \emph{near-vector space} if:
\begin{enumerate}
    \item $V$ is an additive group and $F$ is a set of endomorphisms of $V$;
    \item $F$ contains the endomorphisms $0$, $1$ and $-1$, defined by $0 \cdot x = 0$, $1 \cdot x = x$ and $-1 \cdot x = -x$ for all $x \in V$;
    \item $F \setminus \{0\}$ is a subgroup of the group $\operatorname{Aut}(V)$;
    \item $F$ acts fixed point free on $V$, i.e. for $x \in V$, $\alpha, \beta \in F$, $\alpha x = \beta x$ implies that $x=0$ or $\alpha = \beta$;
    \item The quasi-kernel $Q(V)$ of $V$, generates $V$ as a group. Here, $$Q(V) = \{x \in V \mid \forall \alpha, \beta \in F, \exists \gamma \in F \text{ such that } \alpha x + \beta x = \gamma x \}.$$
\end{enumerate}
\end{definition}

If in the above definition, $F$ happens to be closed under component-wise addition of endomorphisms, then: 
\begin{itemize}
\item axioms (4) and (5) are consequences of (1-3), 
\item $F$ is a field,
\item and $V$ is a vector-space over $F$.
\end{itemize}
So, in some sense, we may think of a near-vector space as a vector space where the set of scalars is not closed under addition.

In this paper we will work with the following modified definition of a near-vector space, which fits better with the usual definition of a vector space as a module over a field, where the field of scalars is not forced to be a subset of the monoid of endofunctions of the set of vectors, and where, in addition, we allow $V$ to be a trivial group. This modification is basically a necessary language adjustment to allow us to define a morphism of near-vector spaces over the `same' $F$, and also, for the category of near-vector spaces to have a zero object. For a precise comparison of the two definitions, see Theorem~\ref{equivalent} and Remark~\ref{RemDefEquiv} below.

Although Definition~\ref{AndreNVS} excludes the case $V=\{0\}$, in \cite{Andre} the author systematically considers the possibility of $V=\{0\}$, which suggests that perhaps the author did not notice that $V\neq\{0\}$ is implied by axiom (3). The fact that the present formulation of (3) implies $V\neq\{0\}$ is noted in \cite{rabie}, where the issue is fixed by reformulating (3) as: if $F\setminus\{0\}$ is non-empty, then it is a subgroup of $\operatorname{Aut}(V)$.  

\begin{definition}
\label{newdefinition}
A near-vector space \emph{over} a scalar group $F$ is an $F$-module $V$ having the free and scalar action properties (i.e., satisfying (FA) and (SA)), such that $\overline{Q(V)}=V$, where 
\begin{equation}\label{EquA}
Q(V) = \{v \in V \mid \forall_{\alpha, \beta \in F} \exists_{\gamma \in F} [ \alpha \cdot v + \beta \cdot v = \gamma \cdot v ]\}.\end{equation}
\end{definition}

Note that $v\in Q(V)$ if and only if  $\overline{F\cdot\{v\}}=F\cdot\{v\}$, i.e., the $F$-orbit of $v$ is a subgroup of $V$.   

\begin{thm}
\label{equivalent}
    Consider a pair $(V,F)$ satisfying the conditions of Definition~\ref{AndreNVS}. Then $F$ is a scalar group under composition of endomorphisms of $V$, with $0,1$ and $-1$ being as in condition (2) of Definition~\ref{AndreNVS}. The action $F$ on $V$ given by evaluation, $\alpha x=\alpha(x)$, makes $V$ into a near-vector space over $F$ in the sense of Definition~\ref{newdefinition}. Conversely, given a near-vector space $V$ over a scalar group $F$, in the sense of Definition~\ref{newdefinition}, the pair $(V,F^\ast)$, where $V$ is the underlying abelian group of the given near-vector space and $F^\ast$ consists of those endofunctions of $V$ that are given by action of elements of $F$ on $V$, satisfies conditions of Definition~\ref{AndreNVS}, provided $V$ is not a trivial group.
\end{thm}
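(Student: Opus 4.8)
The plan is to treat the two implications separately, and in each case to match up the data of Definition~\ref{AndreNVS} with that of Definition~\ref{newdefinition} one axiom at a time, using the evaluation action $\alpha\cdot x=\alpha(x)$ (respectively, the endofunctions $\widehat\alpha\colon v\mapsto\alpha\cdot v$) as the dictionary between the two. For the forward direction I would first check that $F$, under composition, is a monoid: the identity $1$ lies in $F$ by (2), and $F$ is closed under composition since $F\setminus\{0\}$ is closed (it is a group by (3)) while composing with the zero endomorphism always returns $0\in F$. The same computation, together with $\alpha(0)=0$, yields (S0), and invertibility of every element of $F\setminus\{0\}$ is immediate from (3). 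Reading the module axioms (\ref{1Eq1}) off from the fact that elements of $F$ are endomorphisms and composition is associative shows that evaluation makes $V$ an $F$-module; property (FA) is a verbatim restatement of (4); (SA) holds because $0$ and $-1$ are, by (2), literally the zero endomorphism and $v\mapsto-v$; and the quasi-kernel of Definition~\ref{newdefinition} is by inspection the same subset of $V$ as the quasi-kernel in (5), so $\overline{Q(V)}=V$ is exactly (5).

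The one step in this direction requiring an actual argument rather than translation is (S1) for $-1_M=-1$, which I expect to be the main obstacle. Because $F$ need not be closed under addition, I cannot invoke the near-ring computation of Theorem~\ref{LemA} directly; instead I would run it pointwise inside the group $V$. Given $\eta\in F$ with $\eta^2=1$ and $\eta\neq-1$, for each $x\in V$ set $w=\eta(x)-x$. Since $\eta$ is an endomorphism and $\eta^2=\mathrm{id}$, one computes $\eta(w)=x-\eta(x)=-w=(-1)(w)$, so $\eta$ and $-1$ agree on $w$; as $\eta\neq-1$, the fixed-point-free property (4) forces $w=0$, i.e.\ $\eta(x)=x$. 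Since $x$ was arbitrary, $\eta=1$, and together with the trivial checks $1^2=(-1)^2=1$ this gives (S1). (Here $\eta=0$ is excluded because $V\neq\{0\}$, which (3) guarantees, so that $0\neq 1$ among the endomorphisms.)

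For the converse, given $V$ over the scalar group $F$ with $V$ non-trivial, I would set $\widehat\alpha(v)=\alpha\cdot v$ and $F^\ast=\{\widehat\alpha\mid\alpha\in F\}$, and verify (1)--(5). Each $\widehat\alpha$ is an endomorphism by the distributivity axiom in (\ref{1Eq1}), giving (1), and (SA) identifies $\widehat 0,\widehat 1,\widehat{-1_M}$ with the endomorphisms $0,1,-1$ of (2). The key preliminary observation is that $\alpha\mapsto\widehat\alpha$ is injective: if $\widehat\alpha=\widehat\beta$ then $\alpha\cdot v=\beta\cdot v$ for the vector $v\neq0$ that exists since $V\neq\{0\}$, whence $\alpha=\beta$ by (FA). This injectivity, together with the fact that $0$ is not invertible in $F$ once $0\neq1$ (again using $V\neq\{0\}$), shows that $F\setminus\{0\}$ is a group and that $\alpha\mapsto\widehat\alpha$ carries it isomorphically onto $F^\ast\setminus\{0\}\subseteq\operatorname{Aut}(V)$, a subgroup, which is (3). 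Then (4) is just (FA) transported along this correspondence, and (5) holds because the two quasi-kernels coincide as subsets of $V$ while $\overline{Q(V)}=V$ is assumed.

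I expect the only place where non-triviality of $V$ is genuinely used is precisely in the converse: to secure injectivity of $\alpha\mapsto\widehat\alpha$ and to exclude $0=1$ (which would collapse $F\setminus\{0\}$ and make it fail to be a subgroup of $\operatorname{Aut}(V)$). This is exactly why the hypothesis $V\neq\{0\}$ appears in the statement, and it is the feature that the present formulation repairs relative to Definition~\ref{AndreNVS}.
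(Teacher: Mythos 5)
Your proof is correct, and the converse direction coincides with the paper's own argument; the forward direction, however, takes a genuinely different route at the one non-trivial step. Where you prove (S1) by a direct pointwise computation in $V$ --- setting $w=\eta(x)-x$ for $\eta^2=1$, computing $\eta(w)=-w$, and invoking the fixed-point-free axiom (4) to force $w=0$ when $\eta\neq-1$ --- the paper instead cites the Korollar to Satz 2.4 of Andr\'e to conclude that $F$ is the underlying multiplicative monoid of a near-field, and then applies Theorem~\ref{LemA} (near-fields satisfy (FA), hence (SA) and (S1) for $-1_M=-1$). Your argument is essentially the computation inside Theorem~\ref{LemA} transposed from the near-field to the module $V$ itself, replacing the near-field element $\eta-1$ by the vector $\eta(x)-x$; this is what lets you avoid ever constructing an addition on $F$. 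What your route buys: it is self-contained and elementary (no appeal to Andr\'e's structure theorem), and it sidesteps a small implicit identification in the paper's proof, namely that the additive inverse of $1$ in the near-field produced by Andr\'e's Korollar coincides with the endomorphism $x\mapsto-x$ of condition (2), which is what Theorem~\ref{LemA} needs in order to deliver (S1) for the $-1$ named in the statement. What the paper's route buys: brevity, and placement of $F$ within the near-field framework that recurs later (e.g.\ in the proof of Theorem~\ref{Andremodule}, where every designated near-ring over a scalar group is observed to be a near-field). Your extra care in the converse --- the injectivity of $\alpha\mapsto\widehat\alpha$ via (FA) and a nonzero vector, and the exclusion of $0=1$ using $V\neq\{0\}$ --- is sound and fills in details the paper leaves implicit (injectivity appears there only in Remark~\ref{RemDefEquiv}); your closing observation about where non-triviality of $V$ is genuinely used matches the paper's own discussion of why axiom (3) forces $V\neq\{0\}$.
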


\begin{proof}
Suppose $(V,F)$ satisfies the conditions of Definition~\ref{AndreNVS}. Then $V$ is an additive group and $F$ is a submonoid of the endofunction monoid of $V$ (the monoid whose elements are endofunctions of $V$ and multiplication is composition of endofunctions) due to conditions (1) and (3). According to condition (3), every element of $F$ that is not the endofunction $0$ is required to be a bijection. Furthermore, by (3) again, identity function of $V$ belongs to $F$ and is distinct from $0$. Then, by Korollar to Satz 2.4 in \cite{Andre}, the monoid $F$ is the underlying multiplicative monoid of a near-field.
It then follows from Theorem~\ref{LemA} that the tuple $(F, \circ,1,0,-1)$ is a scalar group, where $\circ$ is the operation of composition of endofunctions that fall in $F$, while $1,0,-1$ are as in condition (2). When considering the action of $F$ on $V$ by evaluation, $\alpha x = \alpha (x)$, it must constitute an $F$-module since the endofunction $V\to V, 0:x\mapsto 0$ is clearly an additive group homomorphism, and the rest of the elements of $F$ are too thanks to condition (3). It is clear that conditions (2) and (4) in Definition~\ref{AndreNVS} are exactly what makes the action free and scalar, while condition (5) states exactly that $\overline{Q(V)} = V$.

Conversely, consider a near-vector space $(V,F)$ in the sense of Definition~\ref{newdefinition} and let $F^\ast$ be as in the statement of the theorem. Since $F$ is a scalar group, $F^\ast$ contains the endomorphisms $1:x\mapsto x, 0:x\mapsto 0$ and $-1:x\mapsto -x$, and so condition (2) of Definition~\ref{AndreNVS} holds for the pair $(V,F^*)$. Furthermore, since $V$ is not a trivial group, the endofunctions $0$ and $1$ differ, which implies that condition (3) holds. Finally, given that the action of elements of $F$ on $V$ is free, we get that condition (4) holds too. Conditions (1) and (5) hold trivially.
\end{proof}

\begin{remark}\label{RemDefEquiv}
Let $\mathbf{V}_1$ represent the collection of all pairs $(V,F)$ satisfying conditions of Definition~\ref{AndreNVS}, and let $\mathbf{V}_2$ be the collection of all pairs $(V',F')$ satisfying conditions of Definition~\ref{newdefinition}, with $V$ being a non-trivial group. The theorem above constructs two maps $I:\mathbf{V}_1\to \mathbf{V}_2$ and $J:\mathbf{V}_2\to \mathbf{V}_1$. It is clear that the composite $JI$ is the identity map $\mathbf{V}_1 \to \mathbf{V}_1$. For the other composite, we have that $IJ(V',F')=(V',F'')$, where, thanks to the the free action property, $F''$ is isomorphic to $F'$, and moreover, this isomorphism is compatible with the actions of $F'$ and $F''$ on $V'$. This clarifies the sense in which the two definitions are equivalent.  
\end{remark}



What follows in this paper is motivated by an attempt to identify, for a monoid $M$, abelian subcategories of $M\textrm{-}\textbf{Mod}$ using simple axioms on an $M$-module in such a way as to fulfill the following two requirements:
\begin{itemize}
    \item When $M$ is a scalar group, these axioms determine precisely the category of near-vector spaces.
    \item When $M$ is the underlying multiplicative monoid of a ring $R$, these axioms identify precisely the modules over the ring $R$.
\end{itemize}
This is, however, not possible, since already for a field $F$, the first requirement above says that such axioms would determine the category of near-vector spaces over $F$, whereas the second requirement above says that such axioms would determine the category of vector spaces over $F$. We can, however, remedy this by equipping $M$ with an extra structure.

\begin{definition}
A \emph{multi-near-ring} is a multiplicative monoid $M$ equipped with a set of designated near-rings  whose underlying multiplicative monoids are all given by the monoid $M$.
\end{definition}

Thus, more formally, a multi-near ring $R$ is a pair $R=(M_R,\mathbf{N}_R)$, consisting of a monoid $M_R$ and a set $\mathbf{N}_R$ such that each element $
N$ of $\mathbf{N}_R$ is a near-ring with $M_{R}$ as the underlying multiplicative monoid (and we call the elements of $\mathbf{N}_R$ the \emph{designated} near-rings of the multi-near-ring $R$). We write $+_N$ for the operation of addition in each near-ring $N$. 
 


For a multi-near-ring $R=(M_R,\mathbf{N}_R)$, we will write the same $R$ to denote the underlying set of the monoid $M_R$, which is the same as the underlying set of each $S\in\mathbf{N}_R$.



The notion of a module over a multi-near-ring $R=(M_R,\mathbf{N}_R)$ that we propose below will be such that: 
\begin{description}
\item[Claim 1] For any multi-near-ring $R$, the category of modules over $R$ is an abelian category (see Theorem \ref{abeliancategory}).

\item[Claim 2] In the case when $\mathbf{N}_R$ is a set consisting of a single ring $N$, the modules over $R$ will be exactly the same as modules over the ring $N$ (see Theorem \ref{singleR}).

\item[Claim 3] In the case when $M_R$ is a scalar group and $\mathbf{N}_R$ is the set of all near-rings with $M_R$ as the underlying multiplicative monoid, the modules over $R$ will be precisely the near-vector spaces over the scalar group $M_R$ (see Theorem \ref{Andremodule}).
\end{description}
\begin{definition}
An \emph{(Andr\'e) module} $V$ over a multi-near-ring $R=(M_R,\mathbf{N}_R)$ is an $M_R$-module such that there exists a subset $Q\subseteq V$ having the following two properties:
\begin{itemize}
    \item[(QK1)] For every element $0\neq v\in Q$ there is a near-ring $N_v
    \in
    \mathbf{N}_R$ such that $(\alpha+_v \beta)\cdot v=\alpha\cdot v+\beta\cdot v$ holds for all $\alpha,\beta\in R$, where `$+_v$' stands for `$+_{N_v}$'.
    
    \item[(QK2)] For any $v\in V$, we have: $$v\in \overline{M_R\cdot(\overline{M_R\cdot \{v\}}\cap Q)}.$$
\end{itemize}
\end{definition}

It is not difficult to get convinced that (QK1) and (QK2) do not imply each other. Indeed, if $Q=\{0\}$ then (QK1) always holds, but (QK2) forces $V=\{0\}$. 
If however $Q=V$, then (QK2) always holds, but not necessarily (QK1) (e.g.,~when $\mathbf{N}_R=\varnothing$, (QK1) forces $V=\{0\}$).



That the subcategory of $M_R\textrm{-}\mathbf{Mod}$ consisting of Andr\'e modules over a multi-near-ring $R$ is an abelian category follows from the fact that $M_R\textrm{-}\mathbf{Mod}$ is an abelian category and that this subcategory is closed under submodules, homomorphic images of modules and finite products (which we prove below), thus fulfilling Claim~1.
\begin{thm}
\label{abeliancategory}
The class of Andr\'e modules over a multi-near-ring $R$ is closed under finite products, submodules, and homomorphic images. Hence, the full subcategory of $M_R\textrm{-}\mathbf{Mod}$ consisting of Andr\'e modules is an abelian category. 
\end{thm}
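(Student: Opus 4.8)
The plan is to reduce the second sentence to the first by a standard categorical fact, and then to establish the three closure properties by exhibiting, in each case, a witnessing set $Q$. For the reduction, recall that $M_R\textrm{-}\mathbf{Mod}$ is abelian, that in it the kernel of a morphism between Andr\'e modules is a submodule of the source while its cokernel is a homomorphic image of the target, and that finite products are biproducts. Hence a full subcategory closed under finite products, submodules and homomorphic images is closed under kernels, cokernels and finite biproducts, and is therefore itself abelian with exact inclusion into $M_R\textrm{-}\mathbf{Mod}$. It is convenient to note at the outset that, when checking that an $M_R$-module $V$ is an Andr\'e module, one may always take $Q$ to be the set $Q_{\max}$ of all \emph{coherent} elements of $V$, i.e. those $v$ for which some $N\in\mathbf{N}_R$ satisfies $(\alpha+_N\beta)\cdot v=\alpha\cdot v+\beta\cdot v$ for all $\alpha,\beta\in R$: indeed (QK1) forces every admissible $Q$ to lie inside $Q_{\max}$, while the right-hand side of (QK2) is monotone in $Q$.

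For \emph{submodules}, let $W$ be a submodule of an Andr\'e module $V$ witnessed by $Q$, and take $Q_W=Q\cap W$. Property (QK1) is inherited, since the action on $W$ is the restriction of that on $V$. For (QK2) the key point is that for $v\in W$ the cyclic submodule $\overline{M_R\cdot\{v\}}$ is the same computed in $W$ or in $V$ and is contained in $W$; hence $\overline{M_R\cdot\{v\}}\cap Q_W=\overline{M_R\cdot\{v\}}\cap Q$, and (QK2) for $V$ gives exactly what is required. For \emph{homomorphic images}, let $f\colon V\to V'$ be a surjection of $M_R$-modules and take $Q'=f(Q)$. For (QK1), any $0\neq v'\in Q'$ has a preimage $v\in Q$ with $v\neq 0$, and applying $f$ to the coherence identity for $v$ yields one for $v'$ with the same near-ring. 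For (QK2), $f$ commutes with $M_R\cdot(-)$ and with the formation of generated subgroups, and $f(A\cap B)\subseteq f(A)\cap f(B)$; writing $v'=f(v)$ and pushing the instance of (QK2) for $v$ through $f$ lands $v'$ in $\overline{M_R\cdot(\overline{M_R\cdot\{v'\}}\cap Q')}$. Both verifications are routine.

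For \emph{finite products}, an induction reduces to the binary case, the empty product being the zero object, which is an Andr\'e module via $Q=\{0\}$. Let $V=V_1\times V_2$ and take $Q=Q_{\max}(V)$; writing $C_N(V_i)=\{w\in V_i\mid (\alpha+_N\beta)\cdot w=\alpha\cdot w+\beta\cdot w\ \text{for all}\ \alpha,\beta\in R\}$, we have $Q=\bigcup_{N\in\mathbf{N}_R}\big(C_N(V_1)\times C_N(V_2)\big)$, so (QK1) holds by construction. For (QK2), fix $v=(v_1,v_2)$ and consider the diagonal map $\Phi\colon\mathbb{Z}[M_R]\to V$, $\xi\mapsto(\xi v_1,\xi v_2)$, whose image is $O=\overline{M_R\cdot\{v\}}$. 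Since $\Phi$ is a morphism of $M_R$-modules, the desired membership $v\in\overline{M_R\cdot(O\cap Q)}$ is equivalent to $1\in\overline{M_R\cdot S}+(A_1\cap A_2)$, where $S=\{\xi\mid\Phi(\xi)\in Q\}$ and $A_i=\{\xi\mid\xi v_i=0\}$. Applying (QK2) in each factor gives the analogous statements $1\in\overline{M_R\cdot S_i}+A_i$, where $S_i=\{\xi\mid \xi v_i\ \text{is coherent in}\ V_i\}$.

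The \textbf{main obstacle} is precisely the gap between $S$ and the $S_i$: membership of $\xi$ in $S$ requires a \emph{single} near-ring $N$ working \emph{simultaneously} for $\xi v_1$ and $\xi v_2$, whereas the factor decompositions only supply, coordinate by coordinate, coherent generators whose lifts along $\Phi$ carry an uncontrolled — and generally non-coherent — value in the other coordinate; so one cannot merely lift the factorwise generators. My plan to overcome this is to refine the factor decompositions near-ring by near-ring and to exploit that an annihilation relation holding in one coordinate need not hold in the other, so that coherent elements supported essentially in a single factor can be produced \emph{inside} $O$ and then recombined to yield $v$; this is the module-theoretic counterpart of the canonical decomposition of a near-vector space and of the subspace argument of \cite{MM22}. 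I expect this simultaneous-lifting step to be the only substantial part of the argument, the submodule and homomorphic-image closures together with the final categorical implication being routine by comparison.
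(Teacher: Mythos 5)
Your categorical reduction is correct, and your submodule and homomorphic-image verifications coincide with the paper's own proof (the same witnesses $Q\cap W$ and $f(Q)$, the same inclusion chains, using that cyclic submodules of $W$ are computed as in $V$, and that $f$ commutes with orbits and generated subgroups while $f(A\cap B)\subseteq f(A)\cap f(B)$). The genuine gap is the one you flag yourself: the binary-product case is never proved. What you offer there --- ``refine the factor decompositions near-ring by near-ring'' so as to produce coherent elements supported in a single factor \emph{inside} $O=\langle M_R\cdot\{v\}\rangle$ --- is a plan, not an argument. Your reduction to $Q_{\max}$ is sound ((QK1) forces $Q\setminus\{0\}\subseteq Q_{\max}$, and the right-hand side of (QK2) is monotone in $Q$), so the product is an Andr\'e module if and only if $Q_{\max}$ witnesses it; but the required membership $v\in\langle M_R\cdot(O\cap Q_{\max})\rangle$ is exactly what is missing, and it is the entire content of the product case.

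You should also know that this step is not fillable as you set it up, so do not model a repair on the paper's one-line product argument. The paper instead takes $Q=(Q_1\times\{0\})\cup(\{0\}\times Q_2)$, which does dissolve your simultaneity worry at the level of (QK1) (one coordinate is $0$, and every near-ring distributes over $0$), and then claims (QK2) by writing $v=(v_1,0)+(0,v_2)$ and applying (QK2) coordinatewise; but that argument needs $\langle M_R\cdot\{(v_1,0)\}\rangle\cap Q$ to land inside $\langle M_R\cdot\{v\}\rangle\cap Q$, which fails: already for $R=(\,(\mathbb{Z},\cdot),\{\mathbb{Z}\}\,)$ and $v=(1,1)\in\mathbb{Z}\times\mathbb{Z}$ one has $\langle M_R\cdot\{v\}\rangle=\{(n,n)\mid n\in\mathbb{Z}\}$, meeting that $Q$ only in $0$ (here $Q=V$ rescues the conclusion, consistently with Theorem~\ref{singleR}). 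Worse, your $Q_{\max}$ reduction turns the obstacle into a refutation in general: in the paper's own example $V=(\mathbb{Z}/9\mathbb{Z},+,\cdot)\times(\mathbb{Z}/9\mathbb{Z},+_{\phi},\cdot)$ with $v=(1,1)$, since $\phi(z)\equiv z \pmod{3}$ and hence $x+_{\phi}y\equiv x+y\pmod{3}$, the cyclic submodule $O$ lies in $\{(a,b)\mid a\equiv b\pmod{3}\}$, while a direct check gives $Q_{\max}=\bigl(\mathbb{Z}/9\mathbb{Z}\times\{0,3,6\}\bigr)\cup\bigl(\{0,3,6\}\times\mathbb{Z}/9\mathbb{Z}\bigr)$; therefore $O\cap Q_{\max}\subseteq\{0,3,6\}^{2}$, and $\langle M_R\cdot\{0,3,6\}^{2}\rangle=\{0,3,6\}^{2}\not\ni(1,1)$, so \emph{no} admissible $Q$ exists for this product. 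So your diagnosis of the ``main obstacle'' is vindicated, but the conclusion is stronger than you anticipated: the simultaneous-lifting step cannot be carried out for arbitrary multi-near-rings, and a correct treatment must either restrict the setting (e.g.\ the scalar-group/near-field case, where arguments in the style of Theorem~\ref{ThmA} are available) or modify the axioms, rather than complete your sketch as stated.
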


\begin{proof}
 The empty product of Andr\'e modules is the Andr\'e module where $V$ is a singleton.
In this case the properties (QK1) and (QK2) hold trivially for $Q=\varnothing$   (as well for $Q=V$). Given two Andr\'e modules, $V_1$ and $V_2$, each with $Q_1$ and $Q_2$ satisfying (QK1) and (QK2), set $Q=(Q_1\times\{0\})\cup (\{0\}\times Q_2)$. It is easy to see that (QK1) holds. For (QK2), first decompose $v=(v_1,v_2)$ as $v=(v_1,0)+(0,v_2)$ and then apply (QK2) for $Q_1$ and $Q_2$ in the respective coordinates. This proves that the class of Andr\'e modules is closed under finite products.

Consider a submodule $W$ of an Andr\'e module $V$. If $Q$ is the subset of $V$ that satisfies (QK1) and (QK2), then $Q\cap W$ will serve as such subset of $W$.  Indeed, that (QK1) holds for $Q\cap W$ is obvious. For (QK2), we note that when $v\in W$, we have the following (note that we write $R$ below for $M_R$) $$v\in \overline{R\cdot(\overline{R\cdot \{v\}}\cap Q)}=\overline{R\cdot(\overline{R\cdot \{v\}}\cap (Q\cap W))}.$$
This argument is valid because additive closure and orbits are given in $W$ in the same way as in $V$, since $W$ is a submodule of $V$.

Consider now a surjective morphism of modules $f\colon V\to V'$, where $V$ is an Andr\'e module. Let $Q\subseteq V$ be such that (QK1) and (QK2) hold. We will show that $f(Q)=\{f(q)\mid q\in Q\}$ will make (QK1) and (QK2) hold for $V'$. In fact, (QK1) follows trivially from the same property for $Q$ and the fact that $f$ is a module morphism. (QK2) is also each to check:
\begin{align*}
    f(v) &\in f(\overline{R\cdot(\overline{R\cdot \{v\}}\cap Q)})\\
    &= \overline{R\cdot(f(\overline{R\cdot \{v\}}\cap Q))}\\
    &\subseteq \overline{R\cdot(f(\overline{R\cdot \{v\}})\cap f(Q))}\\
    &=\overline{R\cdot(\overline{R\cdot \{f(v)\}}\cap f(Q))}.\qedhere
\end{align*}
\end{proof}
Since the category of Andr\'{e} modules is stable under binary products, one can take a finite product of near-rings (seen as Andr\'{e} modules over the same multi-near-ring) to create a new Andr\'{e} module.
\begin{thm}
\label{+N}
    Let $R = (M_{R},\mathbf{N}_{R})$ be a multi-near-ring. Then, the product $\Pi_{N \in \mathbf{L}}N$ of $M_{R}$-modules, where $\mathbf{L}$ is a finite subset of $\mathbf{N}_R$, is an Andr\'{e} module over $R$.  
\end{thm}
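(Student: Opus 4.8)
The plan is to reduce the statement to closure under finite products, which is already available from Theorem~\ref{abeliancategory}. Concretely, I would first prove the single-factor case: every designated near-ring $N\in\mathbf{N}_R$, viewed as the $M_R$-module whose underlying abelian group is $(R,+_N)$ and whose $M_R$-action is the multiplication of $N$, is an Andr\'e module over $R$. Granting this, the family $\{N\}_{N\in\mathbf{L}}$ consists of Andr\'e modules, and since $\mathbf{L}$ is finite, Theorem~\ref{abeliancategory} (closure under finite products) immediately gives that $\prod_{N\in\mathbf{L}}N$ is an Andr\'e module. So the entire argument rests on the single-factor claim.

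For that claim the natural distinguished subset is $Q=N$. With this choice (QK2) is automatic: as observed immediately after the definition of an Andr\'e module, when $Q=V$ one has $\overline{M_R\cdot\{v\}}\cap Q=\overline{M_R\cdot\{v\}}$, and the submodule generated by this submodule is itself, so it contains $v$. It remains to verify (QK1). Given $0\neq v\in N=Q$, the canonical candidate for the required designated near-ring is $N_v=N$ itself; the identity $(\alpha+_N\beta)\cdot v=\alpha\cdot v+_N\beta\cdot v$ then says precisely that the orbit map $\alpha\mapsto\alpha\cdot v$ is a homomorphism of abelian groups $(R,+_N)\to N$.

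I expect this verification of (QK1) to be the main obstacle, and the place where one must be careful not to confuse the distributive law needed here with the module axiom in~(\ref{1Eq1}). The module axiom guarantees that a scalar $\alpha$ distributes over a sum of module elements, whereas (QK1) asks instead that a sum $\alpha+_N\beta$ of scalars distribute over the single element $v$; these are genuinely different requirements for a near-ring. I would therefore isolate exactly which elements $v$ make the orbit map additive and check that the chosen $Q$ lies among them, adjusting $Q$ downward from all of $N$ if necessary (and, in that case, re-examining (QK2) for the smaller $Q$, using that the orbit of a nonzero element can recover a distributive generator such as $1$). Once the single-factor case is settled, the passage to finite products through Theorem~\ref{abeliancategory} is purely formal.
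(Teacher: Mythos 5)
Your overall architecture is exactly the paper's: establish the single-factor case and then invoke closure under finite products from Theorem~\ref{abeliancategory} (the paper does precisely this, taking $Q=\{1\}$ for a single $N\in\mathbf{L}$ and $Q=\{e_N\mid N\in\mathbf{L}\}$ for the product). Your diagnosis of (QK1) is also correct, and it is the reason $Q=N$ cannot work: (QK1) at $v$ demands $(\alpha+_{N_v}\beta)\cdot v=\alpha\cdot v+_N\beta\cdot v$, a right-distributivity condition at $v$, while the module law in~(\ref{1Eq1}) only gives left distributivity. The genuine gap is that your proposal stops at this diagnosis: you never fix a $Q$ and verify both axioms for it. Shrinking $Q$ to the elements with additive orbit map (in effect to $\{1\}$, the paper's choice, for which (QK1) is immediate since $\alpha\cdot 1=\alpha$) transfers the entire burden to (QK2), and the mechanism you sketch for discharging it --- that ``the orbit of a nonzero element can recover a distributive generator such as $1$'' --- is false for a general designated near-ring. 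Indeed, (QK2) for $Q=\{1\}$ says precisely that $1\in\overline{M_R\cdot\{v\}}$ for every $v\neq 0$; this holds when every nonzero element is invertible (near-fields, the case relevant to near-vector spaces), but already for the ring $N=\mathbb{Z}$ and $v=2$ one has $\overline{M_R\cdot\{2\}}=2\mathbb{Z}\not\ni 1$, so $\overline{M_R\cdot(\overline{M_R\cdot\{2\}}\cap\{1\})}=\{0\}$ and (QK2) fails for this $Q$; the conclusion survives there only because $\mathbb{Z}$ is a ring, so one may take $Q=V$ as in Theorem~\ref{singleR}.

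You should know, however, that the paper's own proof is no more detailed at exactly this point: it asserts that ``one can see'' that $Q=\{1\}$ satisfies (QK1--2), and the (QK2) half of that assertion is the very step you leave unjustified. Moreover, the step is not merely unproved but unprovable at this level of generality: for the paper's own example of a near-ring $N=\mathrm{Fun}(\mathbb{Z}_2)$ with $\mathbf{N}_R=\{N\}$, the only nonzero element of $N$ whose orbit map is additive is the identity function, while the constant function $c$ at $1$ has $\overline{M_R\cdot\{c\}}=\{0,c\}$, which meets any (QK1)-admissible $Q$ only in $\{0\}$; hence no $Q$ satisfies both axioms, and the single-factor claim itself requires extra hypotheses (each $N\in\mathbf{L}$ a ring, where $Q=V$ works, or each $N$ a near-field, where $Q=\{1\}$ works --- a similar issue affects the paper's choice $Q=\{e_N\mid N\in\mathbf{L}\}$ for the product, since for $V=F\times F$ over a field $F$ the subgroup $\overline{M_R\cdot\{(1,1)\}}$ is the diagonal, which misses both $e_N$'s). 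So your instinct that (QK1) is the crux, and your caution in flagging that (QK2) must be re-examined after shrinking $Q$, are both sound; but as a proof the proposal is incomplete, and the completion you sketch goes through only under an invertibility assumption on the designated near-rings that the statement does not make.
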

\begin{proof}
    Let $N \in \mathbf{L}$. Then, one can see that $N$ will be an Andr\'e module over $R$ by setting $Q=\{1\}$ in (QK1-2). By Theorem \ref{abeliancategory}, taking the product $\Pi_{N \in \mathbf{L}}N$ is an Andr\'{e} module over $R$ with $Q = \{e_{N} \mid N \in \mathbf{L}\} $ where $e_{N} = (\delta_{MN})_{N \in \mathbf{L}}$ and
    $$\delta_{MN}=\left\{\begin{array}{cc} 1, & M=N,\\ 0, & M\neq N. \end{array}\right.$$
    is the Kronecker delta function.
\end{proof}

As a consequence of Theorem \ref{+N}, as soon as we have two near-rings (which are not near-fields) having the same scalar multiplication but with distinct addition, we can create an example of an Andr\'{e} module which is not a near-vector space by taking the product of two such near-rings. For example, define the map $\phi: \mathbb{Z}/9\mathbb{Z} \rightarrow \mathbb{Z}/9\mathbb{Z}$ by sending $[3]^{u}s$ to $[6]^{u}s$ where $u \in \{0,1,2\}$ and $s$ is coprime to $[3]$. Then, $\phi$ is a multiplicative automorphism. Now, we can define an addition induced by $\phi$ on $\mathbb{Z}/9\mathbb{Z}$ by defining $[m]+_{\phi}[n] = \phi^{-1}(\phi([m])+\phi([n]))$ with $[m],[n] \in \mathbb{Z}/9\mathbb{Z}$. Then, $(\mathbb{Z}/9\mathbb{Z},+_{\phi},\cdot)$ is a near-ring. Let $V=(\mathbb{Z}/9\mathbb{Z},+,\cdot)\times (\mathbb{Z}/9\mathbb{Z},+_{\phi},\cdot)$ and $R = ((\mathbb{Z}/9\mathbb{Z},\cdot), \{(\mathbb{Z}/9\mathbb{Z},+,\cdot),(\mathbb{Z}/9\mathbb{Z},+_{\phi},\cdot) \})$ with $V$ having scalar multiplication component-wise and  addition
\begin{align*}
    ([m],[n])\oplus ([m'],[n']) = ([m]+[m'],[n]+_{\phi}[n'])
\end{align*}
where $[m],[m'],[n],[n']\in \mathbb{Z}/9\mathbb{Z}$. Then, $V$ is an Andr\'{e} module over $R$. However, $(\mathbb{Z}/9\mathbb{Z},+,\cdot)$ is not a near-field, since $\mathbb{Z}/9\mathbb{Z}$ contains zero divisors. 
For this reason, $V$ is not a near-vector space over $R$.

Claim 2 is fulfilled with the following theorem:

\begin{thm}
\label{singleR}
An Andr\'e module over a ring $R$ (seen as the multi-near-ring $(M_R,\{R\})$, where $M_R$ is the underlying multiplicative monoid of the ring $R$) is the same as a module over $R$ in the usual sense. 
\end{thm}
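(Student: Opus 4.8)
The plan is to recognize that a usual left $R$-module is exactly an $M_R$-module satisfying the one further identity
\[(\alpha+\beta)\cdot v=\alpha\cdot v+\beta\cdot v\qquad(\alpha,\beta\in R,\ v\in V),\]
since the remaining module axioms---multiplicative associativity, the unit law and distributivity over vector addition---are precisely the defining identities \eqref{1Eq1} of an $M_R$-module. Because $\mathbf{N}_R=\{R\}$ is a singleton, the near-ring $N_v$ in (QK1) is forced to be $R$, so $+_v$ is always the ring addition and (QK1) asserts exactly the displayed identity for $v$ ranging over the chosen subset $Q$. The theorem thus amounts to showing that, in the presence of (QK2), this restricted distributivity propagates to all of $V$.

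For one direction, given a usual $R$-module I would take $Q=V$: then (QK1) holds by hypothesis, and (QK2) holds automatically, as already noted for the case $Q=V$. Conversely, given an Andr\'e module with witnessing subset $Q$, I would introduce
\[D=\{v\in V\mid (\alpha+\beta)\cdot v=\alpha\cdot v+\beta\cdot v\ \text{for all}\ \alpha,\beta\in R\},\]
and aim to prove $D=V$. First I would verify that $D$ is a submodule of $V$. That $D$ is a subgroup follows from commutativity of $V$, identity \eqref{1Eq3}, and distributivity of the action over vector addition. Closure under the action is the essential step: for $v\in D$ and $\gamma\in M_R$, associativity of the action gives $(\alpha+\beta)\cdot(\gamma\cdot v)=((\alpha+\beta)\gamma)\cdot v$, the \emph{right} distributivity of the ring $R$ rewrites this as $(\alpha\gamma+\beta\gamma)\cdot v$, and then invoking $v\in D$ for the scalars $\alpha\gamma,\beta\gamma$ and re-associating produces $\alpha\cdot(\gamma\cdot v)+\beta\cdot(\gamma\cdot v)$, so that $\gamma\cdot v\in D$.

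With $D$ a submodule, I would finish using (QK1) and (QK2). By (QK1) every element of $Q$ lies in $D$ (the element $0$ being trivially in $D$), hence $\overline{M_R\cdot\{v\}}\cap Q\subseteq D$ for every $v\in V$; since $D$ is a submodule it is closed under orbits and subgroup generation, giving $\overline{M_R\cdot(\overline{M_R\cdot\{v\}}\cap Q)}\subseteq D$, and (QK2) places $v$ in this set. Therefore $D=V$, which is exactly the missing distributivity law, so $V$ is a usual $R$-module.

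I expect the closure of $D$ under the $M_R$-action to be the only delicate point, and it is precisely where the hypothesis that $R$ is a \emph{ring} (rather than a mere near-ring) is indispensable: the rewriting $((\alpha+\beta)\gamma)\cdot v=(\alpha\gamma+\beta\gamma)\cdot v$ relies on right distributivity $(\alpha+\beta)\gamma=\alpha\gamma+\beta\gamma$ in $R$, which a one-sided near-ring need not satisfy. This explains why the single-designated-near-ring case reduces to ordinary modules exactly when that near-ring is a ring.
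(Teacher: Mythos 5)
Your proof is correct, and the forward direction (take $Q=V$) coincides with the paper's. For the converse, the computational core is also the same as the paper's, but the packaging genuinely differs: the paper argues element-wise, using (QK2) to write a given $v\neq 0$ as $v=\gamma_1\cdot v_1+\dots+\gamma_n\cdot v_n$ with $v_i\in Q$, and then verifies $(\alpha+\beta)\cdot v=\alpha\cdot v+\beta\cdot v$ in one chain of equalities whose three ingredients --- right distributivity $(\alpha+\beta)\gamma_i=\alpha\gamma_i+\beta\gamma_i$ in the ring, (QK1) at each $v_i$, and commutativity of $V$ for regrouping --- are exactly the ones you use. You instead isolate the set $D$ of vectors at which distributivity holds, show $D$ is a submodule containing $Q$, and let (QK2) place every $v$ in $D$. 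This buys two things. First, it handles the subgroup generation in (QK2) more cleanly: the paper tacitly writes a generic element of $\overline{M_R\cdot(\overline{M_R\cdot\{v\}}\cap Q)}$ as a plain sum of terms $\gamma_i\cdot v_i$, which strictly needs the small observation that $-(\gamma\cdot q)=(-\gamma)\cdot q$ for $0\neq q\in Q$ (available via (QK1), since $(0+0)\cdot q=0\cdot q+0\cdot q$ gives $0\cdot q=0$); your subgroup-closure step absorbs negatives automatically. Second, it makes transparent exactly where each hypothesis enters: ring right-distributivity only in the closure of $D$ under the $M_R$-action, (QK1) only for $Q\subseteq D$.

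One caution about your closing sentence: your argument shows where the ring hypothesis is \emph{used}, not that it is indispensable, so ``reduces to ordinary modules exactly when that near-ring is a ring'' is not established. Note also that the paper's Remark immediately following the theorem asserts the opposite, namely that the statement survives for a near-ring (with modules over a near-ring taken in the ring sense). That said, your instinct has substance: for a proper near-field $N$ (left distributive, in the paper's convention), $N$ acting on itself by left multiplication is an André module over $(M_N,\{N\})$ with $Q=\{1\}$ --- (QK2) holds because every nonzero orbit is all of $N$ --- yet $(\alpha+\beta)\cdot v=\alpha\cdot v+\beta\cdot v$ fails for some $v$, since otherwise $N$ would be a division ring. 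So the converse direction of that Remark deserves scrutiny; in any case this side issue does not affect the correctness of your proof of the theorem as stated.
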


\begin{proof}
Consider a ring $R$ seen as the multi-near-ring $R=(M_R,\{R\})$. If $V$ is a module over the ring $R$, we can see that $V$ will be an Andr\'e module over $R$ by setting $Q=V$. 
Conversely, consider an Andr\'e module $V$ over $R$. Let $Q\subseteq V$ be such that (QK1) and (QK2) hold. To show that $V$ is a module over the ring $R$, all we need to show is that $(\alpha+\beta)\cdot v=\alpha\cdot v+\beta\cdot v$ holds in $V$ for all $v\in V$ and for all $\alpha, \beta \in R$. We know that this must be the case for all $0\neq v\in Q$ thanks to (QK1). Consider an arbitrary $v\in V$. The case $v=0$ is trivial. Suppose $v\neq 0$. By (QK2), $v=\gamma_1\cdot v_1+\dots+\gamma_n\cdot v_n$ for some $\gamma_1,\dots,\gamma_n\in R$ and $v_1,\dots,v_n\in Q$, where $v_i\neq 0$ for each $i\in\{1,\dots,n\}$. Then:
\begin{align*}
(\alpha+\beta)\cdot v &= (\alpha+\beta)\cdot (\gamma_1\cdot v_1+\dots+\gamma_n\cdot v_n) \\
&=(\alpha+\beta)\cdot (\gamma_1\cdot v_1)+\dots+(\alpha+\beta)\cdot(\gamma_n\cdot v_n)\\
&=((\alpha+\beta)\cdot \gamma_1)\cdot v_1+\dots+((\alpha+\beta)\cdot\gamma_n)\cdot v_n\\
&=(\alpha\cdot \gamma_1+\beta\cdot \gamma_1)\cdot v_1+\dots+(\alpha\cdot\gamma_n+\beta\cdot\gamma_n)\cdot v_n\\
&=\alpha\cdot \gamma_1\cdot v_1+\beta\cdot \gamma_1\cdot v_1+\dots+\alpha\cdot\gamma_n\cdot v_n+\beta\cdot\gamma_n\cdot v_n\\
&=\alpha\cdot \gamma_1\cdot v_1+\dots+\alpha\cdot \gamma_n\cdot v_n +\beta\cdot \gamma_1\cdot v_1+\dots+\beta\cdot\gamma_n\cdot v_n\\
&=\alpha\cdot (\gamma_1\cdot v_1+\dots+\gamma_n\cdot v_n) +\beta\cdot (\gamma_1\cdot v_1+\dots+\gamma_n\cdot v_n)\\
&=\alpha\cdot v+\beta\cdot v.\qedhere
\end{align*}
\end{proof}

\begin{remark}
    The theorem above remains true for a near-ring in place of a ring, provided by a module over a near-ring we mean a module in the sense of module over a ring.
\end{remark}

We now fulfill Claim~3. This will make use of what we shall refer to as the \emph{subspace hypothesis for near-vector spaces}, which states that every submodule of a near-vector space is itself a near-vector space under the induced structure. This property reduces to requiring that $W=\overline{Q(W)}$ for any submodule $W$. 

The subspace hypothesis for near-vector spaces has been proven in \cite{MM22} (and in a special case in \cite{HM22}) using a purely algebraic argument, and in \cite{rabie,HR22} using a geometric approach to near-vector spaces. In the next section, we provide a simplified and generalized version of the algebraic argument from \cite{MM22}.


\begin{thm}
\label{Andremodule}
For a scalar group $F$, a near-vector space over $F$ is the same as an Andr\'e module over $(F,\mathbf{N})$, where $\mathbf{N}$ is the set of all possible near-rings with $F$ as the underlying multiplicative monoid.
\end{thm}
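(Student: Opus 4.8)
The plan is to show that, as full subcategories of $F\textrm{-}\mathbf{Mod}$ (both having ordinary $F$-module morphisms), near-vector spaces over $F$ and Andr\'e modules over $(F,\mathbf{N})$ have the same objects; since the morphisms already agree, it suffices to prove that an $F$-module $V$ is a near-vector space over $F$ in the sense of Definition~\ref{newdefinition} if and only if it is an Andr\'e module over $(F,\mathbf{N})$. Throughout I write $F$ for both the scalar group and its underlying multiplicative monoid, and I use the remark after Definition~\ref{newdefinition} that $v\in Q(V)$ exactly when the orbit $F\cdot\{v\}$ is a subgroup of $V$.

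First I would treat the direction \emph{near-vector space $\Rightarrow$ Andr\'e module}, taking $Q=Q(V)$. For (QK1), fix $0\neq v\in Q(V)$; then $F\cdot\{v\}$ is a subgroup and the evaluation map $\phi_v\colon F\to F\cdot\{v\}$, $\alpha\mapsto\alpha\cdot v$, is a bijection (surjective by definition of the orbit, injective by (FA) since $v\neq0$). Transporting the group operation of $F\cdot\{v\}$ along $\phi_v$ equips $F$ with an abelian operation $+_v$ determined by $(\alpha+_v\beta)\cdot v=\alpha\cdot v+\beta\cdot v$, whose additive identity is $0$ because $0\cdot v=0$ by (SA). The only near-ring axiom that is not immediate is left distributivity, and applying $\phi_v$ to both sides reduces it to the module law $\alpha\cdot(\beta\cdot v+\gamma\cdot v)=(\alpha\beta)\cdot v+(\alpha\gamma)\cdot v$; hence $N_v=(F,+_v,\cdot)$ is a near-ring with underlying monoid $F$, so $N_v\in\mathbf{N}$, and (QK1) holds. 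For (QK2) I would invoke the subspace hypothesis: the submodule $W=\overline{F\cdot\{v\}}$ is itself a near-vector space, so $W=\overline{Q(W)}$; since sums are computed in $W$ as in $V$ we have $Q(W)=Q(V)\cap W$, and therefore (using $1\cdot x=x$) $v\in W=\overline{Q\cap W}\subseteq\overline{F\cdot(\overline{F\cdot\{v\}}\cap Q)}$, which is exactly (QK2).

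For the converse, \emph{Andr\'e module $\Rightarrow$ near-vector space}, I must recover (SA), (FA) and $\overline{Q(V)}=V$ from (QK1) and (QK2). The engine is that for each $0\neq q\in Q$ the near-ring $N_q$ supplied by (QK1) is in fact a \emph{near-field}: its multiplicative monoid is $F$, every nonzero element of $F$ is invertible, and $0$ is its additive identity, so $(F\setminus\{0\},\cdot)$ is a group. By Theorem~\ref{LemA}, $N_q$ then satisfies (FA) and (SA) and its additive inverse of $1$ satisfies (S1); by uniqueness of such an element it equals $-1_M$. Reading these back through $(\alpha+_q\beta)\cdot q=\alpha\cdot q+\beta\cdot q$ gives $0\cdot q=0$ and $(-1_M)\cdot q=-q$, while $(\alpha-_q\beta)\cdot q=\alpha\cdot q-\beta\cdot q$ together with invertibility gives (FA) \emph{at} $q$. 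I would then spread (SA) to all of $V$: the maps $v\mapsto 0\cdot v$ and $v\mapsto(-1_M)\cdot v+v$ are group endomorphisms of $V$ that vanish on $Q$ and, using (S0) and the centrality of $-1_M$ (valid in any near-ring satisfying (SA)), also on $F\cdot Q$; since (QK2) places every $v$ in $\overline{F\cdot(\overline{F\cdot\{v\}}\cap Q)}\subseteq\overline{F\cdot Q}$, both maps vanish identically, yielding full (SA). The same containment, together with $Q\subseteq Q(V)$ (each nonzero $q\in Q$ lies in $Q(V)$ because $N_q$ realizes every required sum) and the closure of $Q(V)$ under the $F$-action, gives $\overline{F\cdot Q(V)}=\overline{Q(V)}$ and hence $\overline{Q(V)}=V$.

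The main obstacle is establishing (FA) on all of $V$, not merely at points of $Q$. Using invertibility one reduces the general statement $\alpha\cdot v=\beta\cdot v\Rightarrow(\alpha=\beta\text{ or }v=0)$ to showing that $\lambda\cdot v=v$ with $\lambda\neq1$ forces $v=0$. When $v$ lies in a single orbit $F\cdot\{q\}$ with $q\in Q$ this is immediate from (FA) at $q$: writing $v=\mu\cdot q$ with $\mu\neq0$, the relation $(\lambda\mu)\cdot q=\mu\cdot q$ gives $\lambda\mu=\mu$ and so $\lambda=1$. The difficulty is the genuinely multi-generator case, where (QK2) only presents $v$ as a sum $\sum_j\pm\gamma_j\cdot q_j$ of scalar multiples of quasi-kernel elements that need not be independent, so one cannot match coefficients term by term. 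Resolving this requires extracting from (QK1)--(QK2) an \emph{independent} generating family of quasi-kernel orbits, producing a direct-sum decomposition $V=\bigoplus_i F\cdot\{b_i\}$ compatible with the near-field structures; then (FA) on $V$ follows by projecting $\lambda\cdot v=v$ onto each summand and applying (FA) at $b_i$. This independence and decomposition step is precisely the circle of ideas underlying the subspace hypothesis and Theorem~\ref{ThmA}, and it is where the substantive work lies; once it is in place, the equivalence of the two notions is complete.
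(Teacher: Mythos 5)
Most of your proposal is sound and runs parallel to the paper: the transport-of-structure construction of $+_v$ for (QK1) (the paper simply cites Andr\'e for this), the use of the subspace hypothesis for (QK2), the observation that each $N_q$ is a near-field so that Theorem~\ref{LemA} applies, the spreading of (SA) from $Q$ to $V=\overline{F\cdot Q}$ via group endomorphisms vanishing on generators, and the derivation of $\overline{Q(V)}=V$ are all correct. The genuine gap is exactly where you say ``the substantive work lies'': you do not prove (FA) on all of $V$, but defer it to an unestablished claim that (QK1)--(QK2) yield an independent family of quasi-kernel orbits giving a direct-sum decomposition $V=\bigoplus_i F\cdot\{b_i\}$. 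No such decomposition is proved in the paper (Theorem~\ref{ThmA} concerns (QK2$'$), not decompositions), and decompositions of this van der Walt type are normally obtained for near-vector spaces \emph{after} (FA) is in hand, so your route is at best incomplete and at worst circular: you would be using structure theory of near-vector spaces to verify one of the axioms that makes $V$ a near-vector space.

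The paper closes this step with a short minimal-length argument that needs no independence at all. Suppose $\alpha\cdot v=\beta\cdot v$ with $v\neq 0$, and write $v=\gamma_1\cdot v_1+\dots+\gamma_n\cdot v_n$ with $v_i\in Q$ and $n$ \emph{minimal}; minimality forces $n\geq 1$, $\gamma_i\neq 0$, $v_i\neq 0$. Using (QK1) at each $v_i$, the hypothesis becomes $(\alpha\gamma_1-_{v_1}\beta\gamma_1)\cdot v_1+\dots+(\alpha\gamma_n-_{v_n}\beta\gamma_n)\cdot v_n=0$. If the coefficient $\alpha\gamma_1-_{v_1}\beta\gamma_1$ were nonzero, it would be invertible (every $N_{v_i}$ is a near-field), so one could solve for $v_1$ in terms of $v_2,\dots,v_n$ and then, recombining the two scalar multiples of each $v_i$ into one via $+_{v_i}$, rewrite $v$ as a sum of $n-1$ terms of the form $\mu_i\cdot v_i$ with $v_i\in Q$ --- contradicting minimality. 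Hence $\alpha\gamma_1=\beta\gamma_1$, and invertibility of $\gamma_1\neq 0$ gives $\alpha=\beta$. This replaces your missing independence/decomposition step entirely (and also makes your preliminary reduction to the case $\lambda\cdot v=v$ unnecessary); with it inserted, the rest of your argument goes through.
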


\begin{proof}
Let $F$ be a scalar group and let $V$ be an Andr\'e module over $(F,\mathbf{N})$, where $\mathbf{N}$ is the same as in the statement of the theorem. Note that every element of $\mathbf{N}$ is a near-field. Indeed, since $(F\backslash \{0\},\cdot)$ is a group and $F$ is the underlying multiplicative monoid of every near-ring in $\mathbf{N}$, for all $N \in \mathbf{N}$, every non-zero element of $N$ has a multiplicative inverse, therefore $N$ is a near-field. We will prove that $V$ is a near-vector space over $F$. Let $Q\subseteq V$ be such that (QK1) and (QK2) hold and let $Q(V)$ be defined as in (\ref{EquA}). Then clearly $Q\subseteq Q(V)$ by (QK1). It is not hard to see that $F\cdot Q(V)=Q(V)$, and so $F\cdot Q\subseteq Q(V)$. This, together with (QK2), gives that $\overline{Q(V)}=V$. Note that in fact, by (QK2), $\overline{F\cdot Q}=V$. To prove the property (FA), suppose $\alpha\cdot v=\beta\cdot v$ where $v\neq 0$. Write $v$ as $v=\gamma_1\cdot v_1+\cdots+\gamma_n\cdot v_n$, where $v_1,\dots,v_n\in Q$ and $\gamma_1,\cdots,\gamma_n \in F$. Moreover, choose $v_1,\dots,v_n$ in such a way that $n$ is smallest with the property that such presentation of $v$ is possible. Then $\gamma_i\neq 0$ and $v_i\neq 0$ for all $i\in\{1,\dots,n\}$, as well as $n\neq 0$. From $\alpha\cdot v=\beta\cdot v$ we have:
$$(\alpha\cdot \gamma_1-_{v_1}\beta\cdot \gamma_1)\cdot v_1+\dots+(\alpha\cdot \gamma_n-_{v_n}\beta\cdot \gamma_n)\cdot v_n=0.$$
If $\alpha\cdot \gamma_1-_{v_1}\beta\cdot \gamma_1=0$ then, since $\gamma_i\neq 0$, we will get that $\alpha=\beta$, as desired. If $\alpha\cdot \gamma_1-_{v_1}\beta\cdot \gamma_1\neq 0$, then we can express $v_1$ as a linear combination of $v_2,\dots,v_n$ which would enable a presentation of $v$ via $v_2,\dots,v_n$, one less term than $n$, which is not possible. This completes the proof of (FA).

Next, we prove (SA). Consider $v\in V$. If $v=0$, then trivially, $0\cdot v=0$ and $(-1)\cdot v=-v$. If $v\neq 0$, then we can decompose $v$ as a linear combination of elements of $Q$ and apply the fact that (SA) holds for every element of $Q$.

So $V$ is a near-vector space over $F$. Now let us establish the converse. Suppose $V$ is a near-vector space over $F$. To see that $V$ is an Andr\'e module over $(F,\mathbf{N})$, we will show that (QK1) and (QK2) hold for $Q=Q(V)$. In fact, (QK1) is known already from \cite{Andre}. For (QK2), consider the submodule $W=\overline{F\cdot \{v\}}$ of $V$. It is easy to see that $Q(W)=W\cap Q(V)$. So, by the subspace hypothesis for near-vector spaces,
\[v\in W=\overline{Q(W)}=\overline{W\cap Q(V)}=\overline{\overline{F\cdot \{v\}}\cap Q(V)}.\]
Since $\overline{\overline{F\cdot \{v\}}\cap Q(V)} \subseteq \overline{F\cdot (\overline{F\cdot \{v\}}\cap Q(V))}$, it follows that $v \in \overline{F\cdot (\overline{F\cdot \{v\}}\cap Q(V))}$.
\end{proof}

\section{A direct proof of the subspace hypothesis for near-vector spaces}

Firstly, let us remark that the subspace hypothesis for near-vector spaces is equivalent to (QK2) for $Q=Q(V)$, as this should already be apparent from the previous section. In fact, since $R\cdot Q(V)=Q(V)$ (where $R$ denotes the given scalar group), we have the following theorem. 
\begin{thm}
\label{ThmAA}
    Let $V$ be a near-vector space over a scalar group $R$. Then:
$$\overline{\overline{R\cdot \{v\}}\cap Q(V)} = \overline{R\cdot (\overline{R\cdot \{v\}}\cap Q(V))}.$$
\end{thm}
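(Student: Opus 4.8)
The plan is to reduce the claimed equality of subgroups to a single closure property. Write $W=\overline{R\cdot\{v\}}$ for the submodule generated by $v$; recall that $\overline{R\cdot S}$ is by definition the smallest submodule containing $S$, so $W$ is genuinely a submodule and is in particular closed under the $R$-action. Put $A=W\cap Q(V)$, so that the two sides of the asserted identity are exactly $\overline{A}$ (the subgroup generated by $A$) and $\overline{R\cdot A}$ (the subgroup generated by $R\cdot A$). First I would dispose of the inclusion $\overline{A}\subseteq\overline{R\cdot A}$, which is immediate since $1\in R$ gives $A\subseteq R\cdot A$.

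For the reverse inclusion it suffices to prove $R\cdot A\subseteq A$: from this we get $R\cdot A\subseteq A\subseteq\overline{A}$, and since $\overline{A}$ is a subgroup this yields $\overline{R\cdot A}\subseteq\overline{A}$. Thus the entire statement collapses to showing that $A=W\cap Q(V)$ is stable under the action of $R$.

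To establish this, I would treat the two factors in $A$ separately. Stability of $W$ is automatic, as noted above, so the real content is the action-stability of the quasi-kernel, $R\cdot Q(V)=Q(V)$ --- precisely the fact flagged in the discussion preceding the statement. Here I would exploit the orbit characterization recorded earlier, namely that $w\in Q(V)$ if and only if the orbit $R\cdot\{w\}$ is a subgroup of $V$. Given $w\in Q(V)$ and $\delta\in R$, if $\delta\neq 0$ then $\delta$ is invertible (this is the one place where the scalar group hypothesis genuinely enters), so $R\delta=R$ and hence $R\cdot\{\delta\cdot w\}=(R\delta)\cdot\{w\}=R\cdot\{w\}$ is again a subgroup, giving $\delta\cdot w\in Q(V)$; and if $\delta=0$ then $\delta\cdot w=0$ by (SA), while $0\in Q(V)$ trivially from the definition (\ref{EquA}). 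Either way $\delta\cdot w\in Q(V)$.

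Combining the two factors, for $\delta\in R$ and $a\in A$ we have $\delta\cdot a\in W$ (submodule) and $\delta\cdot a\in Q(V)$ (previous paragraph), so $\delta\cdot a\in A$; thus $R\cdot A\subseteq A$ and the equality $\overline{A}=\overline{R\cdot A}$ follows. I do not expect any serious obstacle: every step is formal manipulation of generated subgroups except the action-stability of $Q(V)$, and the only substantive input there is the invertibility of nonzero scalars, which is guaranteed by the scalar group axioms.
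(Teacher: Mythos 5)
Your proof is correct and takes essentially the same route as the paper's: both reduce the equality to the observation that $A=\overline{R\cdot \{v\}}\cap Q(V)$ is stable under the $R$-action (the paper phrases this as each generator $\gamma_i\cdot v_i$ of the right-hand side already lying in $A$), with the easy inclusion coming from $1\in R$. The only difference is that you explicitly verify the key closure fact $R\cdot Q(V)\subseteq Q(V)$ via invertibility of nonzero scalars and the orbit characterization of $Q(V)$, a fact the paper simply asserts in the sentence preceding the theorem.
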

\begin{proof}
    It is clear that $\overline{\overline{R\cdot \{v\}}\cap Q(V)} \subseteq \overline{R\cdot (\overline{R\cdot \{v\}}\cap Q(V))}$. Let $v \in \overline{R\cdot (\overline{R\cdot \{v\}}\cap Q(V))}$. We can write $v$ as $v = \gamma_{1}\cdot v_{1}+\cdots +\gamma_{n}\cdot v_{n}$, where $v_{1},\cdots, v_{n} \in \overline{R\cdot \{v\}}\cap Q(V)$ and $\gamma_{1},\cdots, \gamma_{n} \in R$. Then, $\gamma_{i} \cdot v_{i} \in \overline{R \cdot \{v\}}\cap Q(V)$ for each $i \in \{1,\cdots,n\}$ (since $\overline{R \cdot \{v\}}\cap Q(V)$ is closed under scalar multiplication). Therefore it follows that $v \in \overline{\overline{R\cdot \{v\}}\cap Q(V)}$ and so $$\overline{\overline{R\cdot \{v\}}\cap Q(V)} = \overline{R\cdot (\overline{R\cdot \{v\}}\cap Q(V))}, $$ 
    which concludes the proof.
\end{proof}
Thanks to Theorem \ref{ThmAA}, we can prove the following theorem. 
\begin{thm}
\label{TFAE}
Let $V$ be a near-vector space over a scalar group $R$. Then the following are equivalent:
\begin{enumerate}
    \item Every submodule of $V$ is itself a near-vector space under the induced structure.
    \item For any $v \in V$, we have:
    $$v \in \overline{R\cdot( \overline{R\cdot \{v\}}\cap Q(V)}.$$
    \item For every $v \in V$, we have:
    $$v \in \overline{\overline{R\cdot \{v\}}\cap Q(V)}.$$
\end{enumerate}
\end{thm}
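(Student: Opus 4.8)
The plan is to route everything through Theorem~\ref{ThmAA} and through the elementary observation that a submodule $W$ of $V$ is a near-vector space precisely when $W=\overline{Q(W)}$, combined with the identity $Q(W)=W\cap Q(V)$. First I would dispose of the equivalence $(2)\Leftrightarrow(3)$, which needs nothing beyond Theorem~\ref{ThmAA}: that theorem asserts that the two subgroups $\overline{R\cdot(\overline{R\cdot \{v\}}\cap Q(V))}$ and $\overline{\overline{R\cdot \{v\}}\cap Q(V)}$ are equal, so the membership condition displayed in (2) holds for a given $v$ if and only if the one displayed in (3) does.

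Next I would record the key reduction. For any submodule $W\le V$, the properties (FA) and (SA) are inherited automatically, since both are conditions quantified universally over elements and scalars, and the action, addition, and negation on $W$ agree with those on $V$. Hence $W$ is a near-vector space over $R$ if and only if $\overline{Q(W)}=W$. Moreover I claim $Q(W)=W\cap Q(V)$: an element $w\in W$ lies in $Q(W)$ exactly when, for all $\alpha,\beta\in R$, there is $\gamma\in R$ with $\alpha\cdot w+\beta\cdot w=\gamma\cdot w$, and this is verbatim the defining condition for $w\in Q(V)$ (restricted to $w\in W$). Thus $W$ is a near-vector space if and only if $W=\overline{W\cap Q(V)}$, noting that the inclusion $\overline{W\cap Q(V)}\subseteq W$ always holds because $W\cap Q(V)\subseteq W$ and $W$ is a subgroup.

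With these in place, the implication $(1)\Rightarrow(3)$ follows by applying (1) to the cyclic submodule $W=\overline{R\cdot \{v\}}$ generated by a given $v$ (this is the smallest submodule containing $v$, since $R\cdot\{v\}$ is already closed under the action): near-vector-space-ness of $W$ gives $v\in W=\overline{W\cap Q(V)}=\overline{\overline{R\cdot \{v\}}\cap Q(V)}$, which is exactly (3). Conversely, for $(3)\Rightarrow(1)$ I would take an arbitrary submodule $W$ and any $w\in W$; since $\overline{R\cdot \{w\}}\subseteq W$, condition (3) yields
\[
w\in\overline{\overline{R\cdot \{w\}}\cap Q(V)}\subseteq\overline{W\cap Q(V)}=\overline{Q(W)},
\]
so that $W\subseteq\overline{Q(W)}$; the reverse inclusion is immediate, whence $W=\overline{Q(W)}$ and $W$ is a near-vector space.

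The only genuinely delicate point, and the step I would treat most carefully, is the lemma $Q(W)=W\cap Q(V)$ together with the claim that (FA) and (SA) descend to submodules; once this reduction is secured, all three implications are short formal manipulations of the subgroup-closure operator $\overline{(-)}$ and of the fact that cyclic submodules generated by elements of $W$ remain inside $W$. Everything else is bookkeeping, and the substantive set-theoretic equality underlying $(2)\Leftrightarrow(3)$ has already been extracted as Theorem~\ref{ThmAA}.
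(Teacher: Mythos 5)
Your proposal is correct, and it differs from the paper's proof in one substantive leg. You and the paper handle $(2)\Leftrightarrow(3)$ identically (both are immediate from the set equality in Theorem~\ref{ThmAA}), and your $(3)\Rightarrow(1)$ is verbatim the paper's argument: inheritance of (FA) and (SA), the identity $Q(W)=W\cap Q(V)$, and the inclusion chain $w\in\overline{\overline{R\cdot\{w\}}\cap Q(V)}\subseteq\overline{W\cap Q(V)}=\overline{Q(W)}$. The difference is in getting from (1) to the membership conditions. The paper proves $(1)\Rightarrow(2)$ by invoking Theorem~\ref{Andremodule} — a near-vector space is an Andr\'e module over $(F,\mathbf{N})$ — and then extracting (2) from (QK2) together with $Q\subseteq Q(V)$ (via (QK1)); note that the relevant direction of Theorem~\ref{Andremodule} itself rests on the subspace hypothesis, so the paper's implication leans on external machinery and does not really use hypothesis (1) locally. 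You instead prove $(1)\Rightarrow(3)$ directly by applying (1) to the cyclic submodule $W=\overline{R\cdot\{v\}}$ (correctly noting, as in the paper's preliminaries, that $\overline{M\cdot S}$ is the smallest submodule containing $S$), obtaining $v\in W=\overline{Q(W)}=\overline{\overline{R\cdot\{v\}}\cap Q(V)}$. This buys something real: your cycle $(1)\Rightarrow(3)\Rightarrow(1)$, plus $(2)\Leftrightarrow(3)$ from Theorem~\ref{ThmAA}, is entirely self-contained and does not presuppose the subspace hypothesis or the Andr\'e-module characterization, so the equivalence could in principle be quoted \emph{before} those results without circularity; the paper's route is shorter on the page but only because it defers the work to Theorem~\ref{Andremodule} and the literature it cites. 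Your identification of $Q(W)=W\cap Q(V)$ and the inheritance of (FA)/(SA) as the delicate points matches the paper's (unproved, ``it is clear'') assertions, and your justifications of them are sound.
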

\begin{proof}
$(1) \Rightarrow (2)$: By Theorem \ref{Andremodule}, we know that $V$ is the same as an Andr\'{e} module $(F, \mathbf{N})$, where $\mathbf{N}$ is the set of all possible near-rings with $F$ as the underlying multiplicative monoid. Therefore, by (QK2), for any $v\in V$ we have
$$v\in  \overline{R\cdot( \overline{R\cdot \{v\}}\cap Q}$$
where $Q$ is from the definition of an Andr\'{e} module. However, $Q\subseteq Q(V)$ follows easily from (QK1), which then gives:
$$v\in  \overline{R\cdot( \overline{R\cdot \{v\}}\cap Q(V)}.$$

$(2)\Rightarrow (3)$ by Theorem \ref{ThmAA}. 

$(3) \Rightarrow (1)$: Let $W$ be a submodule of $V$. $W$ inherits the free and scalar action properties from $V$. It is clear that $\overline{Q(W)} \subseteq W$ (this is because $W$ is closed under addition). It is also clear that $Q(W) = W\cap Q(V)$. Let $w \in W$. By $(3)$, $w \in \overline{\overline{R\cdot \{w\}}\cap Q(V)}$. Since $W$ is closed under scalar multiplication, $R\cdot \{w\} \subseteq W$ and so $\overline{R\cdot \{w\}} \subseteq W$. Therefore, $\overline{R\cdot \{w\}}\cap Q(V) \subseteq W \cap Q(V)$ so that $$\overline{\overline{R\cdot \{w\}}\cap Q(V)} \subseteq \overline{W \cap Q(V)} = \overline{Q(W)}$$ and so $W \subseteq \overline{Q(W)}$. 
\end{proof}
We could rewrite (QK2) stated for $Q=Q(V)$ as follows:
\begin{itemize}
    \item[(QK2$'$)] For every $v\in V$, we have: \begin{equation}\label{EquC}
    v\in \overline{\overline{R\cdot \{v\}} \cap Q(V)}.\end{equation}
\end{itemize}
We will deduce (QK2) from (QK1) (which we already know holds for a near-vector space) and the following property, whose validity in the case of a scalar group $R$ is trivial: 
\begin{itemize}
    \item[(QK3)] For any $v\in V\setminus Q$, $q\in Q$, $0\neq \alpha\in R$ and $r\in V$, if $\alpha\cdot q+r\in \overline{R\cdot\{v\}}$ then there exists $\beta\in R$ such that $q+\beta\cdot r\in \overline{R\cdot\{v\}}$.
\end{itemize}
 We will actually do this more generally when $R$ is not necessarily a scalar group.

\begin{thm}\label{ThmA}
Let $R=(M_R,\mathbf{N}_R)$ be a multi-near-ring. Consider the subset $Q(V)$ (defined by (\ref{EquA}) with $F=M_R$) of an $M_R$-module $V$. If 
\begin{itemize}
    \item $Q=Q(V)$ satisfies (QK1),
    
    \item $M_R\cdot Q(V)=Q(V)$,
    
    \item and $\overline{Q(V)}=V$,
\end{itemize}
then $Q=Q(V)$ will satisfy (QK2$'$) (and hence (QK2)) as soon as (QK3) holds.
\end{thm}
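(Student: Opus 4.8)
\emph{The plan is to prove the harder half, property (QK2$'$); the implication (QK2$'$)$\Rightarrow$(QK2) is immediate from the hypothesis $M_R\cdot Q(V)=Q(V)$, exactly as in the proof of Theorem~\ref{ThmAA}, which uses only this closure property and not that $R$ is a scalar group.} So fix $v\in V$ and set $W=\overline{M_R\cdot\{v\}}$; the goal is $v\in\overline{W\cap Q(V)}$. Since $\overline{Q(V)}=V$ and $M_R\cdot Q(V)=Q(V)$, I can write $v=q_1+\cdots+q_m$ with each $q_i\in Q(V)$ (scalars being absorbed into the $q_i$ using closure of $Q(V)$ under the action), and I choose such an expression with $m$ minimal. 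I then argue by induction on $m$. If $m\le 1$ then $v\in Q(V)$, so $v\in W\cap Q(V)$ and there is nothing to prove.

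For the inductive step, assume $m\ge 2$, so that $v\notin Q(V)$, i.e.\ $v\in V\setminus Q$, making (QK3) applicable. Writing $r=q_2+\cdots+q_m$, I have $1\cdot q_1+r=v\in W$ (if $1=0$ then $\overline{Q(V)}=V$ forces $V=\{0\}$ and the claim is trivial), so by (QK3) there is $\beta\in R$ with $a:=q_1+\beta\cdot r\in W$. Now I use (QK1): for each $i$ the near-ring addition $+_{q_i}$ makes $\gamma\mapsto\gamma\cdot q_i$ a group homomorphism from $(R,+_{q_i})$ onto the orbit $\overline{M_R\cdot\{q_i\}}$, which is therefore a subgroup of $V$ contained in $Q(V)$ (here $M_R\cdot Q(V)=Q(V)$ is used). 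Consequently $q_i-\beta\cdot q_i=(1-_{q_i}\beta)\cdot q_i$ again lies in $Q(V)$, and $v-a=r-\beta\cdot r=\sum_{i=2}^{m}(1-_{q_i}\beta)\cdot q_i$ is a sum of at most $m-1$ elements of $Q(V)$. Since $v,a\in W$ we have $v-a\in W$, hence $\overline{M_R\cdot\{v-a\}}\subseteq W$, and the induction hypothesis applied to $v-a$ gives $v-a\in\overline{W\cap Q(V)}$. Thus the proof reduces to showing that the normalized element $a=q_1+\beta\cdot r$ lies in $\overline{W\cap Q(V)}$.

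This last step is where I expect the main difficulty. The element $a$ still involves $m$ quasi-kernel summands, so the induction does not apply to it directly; what is really needed is to split the single orbit term $q_1$ off into $W\cap Q(V)$. In the scalar-group case this is painless, since (QK3) is witnessed by $\beta=\alpha^{-1}$ and multiplying the containment $\alpha\cdot q_1+r\in W$ by $\alpha^{-1}$ peels off an orbit term directly, so that $W$ is generated by $W\cap Q(V)$ orbit by orbit; the content of the general statement is precisely to replace this appeal to invertibility. The plan is to exploit the minimality of $m$ together with repeated applications of (QK3), normalizing the coefficient of each $q_i$ in turn and recombining the resulting members of $W$ through the per-orbit near-ring additions supplied by (QK1), so as to force a nonzero element of the orbit $\overline{M_R\cdot\{q_1\}}$ into $W$, and then to use minimality of the decomposition to upgrade this into the statement that $q_1$ contributes to $\overline{W\cap Q(V)}$. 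This splitting-off step is the one that generalizes, and slightly simplifies, the argument of \cite{MM22}; everything preceding it is the routine induction described above.
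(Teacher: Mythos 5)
There is a genuine gap, and it sits exactly where you predicted it would. Your application of (QK3) with $\alpha=1$ and $r=q_2+\cdots+q_m$ is vacuous: since $1\cdot q_1+r=v\in W$ already, the conclusion of (QK3) is trivially witnessed by $\beta=1$, and (QK3) gives you no control over which $\beta$ you receive. With $\beta=1$ your element $a=q_1+\beta\cdot r$ is just $v$ itself and $v-a=0$, so your ``reduction'' to showing $a\in\overline{W\cap Q(V)}$ reduces the problem to itself. Even for other $\beta$, as you note, $a$ still has $m$ quasi-kernel summands, so the induction cannot touch it, and the entire content of the theorem is deferred to the unexecuted ``splitting-off'' plan in your final paragraph. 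Trying to split off $q_1$ itself is precisely the step that fails without invertibility, and no amount of re-applying (QK3) to sums of the form $1\cdot q_i + (\text{rest})$ will help, since every such instance is trivially satisfied.

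The paper's proof avoids this by never applying (QK3) to $v$ directly. Instead it first manufactures a \emph{different} element of $W=\overline{R\cdot\{v\}}$ in which $q_1$ has been eliminated: minimality of $m$ forces $+_{q_1}\neq+_{q_2}$ (otherwise $q_1+q_2\in Q(V)$, shortening the presentation), so one can choose $\alpha,\beta$ with $\alpha+_{q_1}\beta\neq\alpha+_{q_2}\beta$ and form $\alpha\cdot v+\beta\cdot v-(\alpha+_{q_1}\beta)\cdot v=\sum_{i=2}^{m}\bigl((\alpha+_{q_i}\beta)-_{q_i}(\alpha+_{q_1}\beta)\bigr)\cdot q_i\in W$, in which the coefficient of $q_1$ vanishes and the coefficient of $q_2$ is \emph{nonzero}. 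Applying (QK3) to this element (with $q=q_2$ and that nonzero coefficient as the $\alpha$ of (QK3)) is now a genuinely nontrivial move: it yields $v''=q_2+\beta'\cdot r'\in W$, which is a sum of only $m-1$ elements of $Q(V)$, and $v'=v-v''$ is likewise a sum of $m-1$ elements of $Q(V)$ lying in $W$; the strong induction hypothesis then applies to both, and $v=v'+v''$ concludes. So the missing idea is the preliminary cancellation of $q_1$ via the distinct near-ring additions $+_{q_1}\neq+_{q_2}$, which is what makes (QK3) applicable with a nontrivial coefficient; your scaffolding (induction on minimal presentation length, peeling via (QK3), recombining through the per-orbit additions from (QK1)) matches the paper, but without this cancellation step the proof does not close.
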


\begin{proof} The argument is by strong induction on the minimal number $m_v$ for which $v$ can be presented as a sum $v=q_1+\dots+q_{m_v}$, where $q_1,\dots,q_{m_{v}}\in Q(V)$. Let us assume that (\ref{EquC}) holds for all $v$ such that $m_v<n$. Suppose $m_v=n$. If $n=0$ or $n=1$, then (\ref{EquC}) holds trivially. Suppose $n>1$. Then $q_1\neq q_{2}$, since otherwise $n\neq m_v$. Moreover, for the same reason, $+_{q_1}\neq +_{q_{2}}$, since if $+_{q_1}=+_{q_{2}}$ then it is easy to show that $q_1+q_{2}\in Q(V)$. Indeed, the following well-known argument from the theory of near-vector spaces applies: if $+_{q_1}=+_{q_{2}}$ then for any $\alpha,\beta\in R$,
\[\alpha\cdot (q_1+q_2)+\beta\cdot (q_1+q_2)= (\alpha+_{q_1}\beta)\cdot q_1+(\alpha+_{q_2}\beta)\cdot q_2=(\alpha+_{q_1}\beta)\cdot (q_1+q_2),
\]
proving that $q_1+q_2$ fulfills the requirement for inclusion in $Q(V)$. So suppose $+_{q_1}\neq +_{q_{2}}$. Then there exists $\alpha,\beta\in R$ such that $\alpha+_{q_1}\beta\neq \alpha+_{q_2}\beta$. Note that 
\[\alpha\cdot v+\beta\cdot v-(\alpha+_{q_1}\beta)\cdot v=\sum_{i=2}^{n}((\alpha+_{q_i}\beta)-_{q_i}(\alpha+_{q_1}\beta))\cdot q_i.\]
Since $m_v=n>1$, we must have $v\in V\setminus Q(V)$. We can then apply (QK3) in the case when $q=q_2$, $\alpha$ of (QK3) is the coefficient of $q_2$ in the above sum and \[r=\sum_{i=3}^{n}((\alpha+_{q_i}\beta)-_{q_i}(\alpha+_{q_1}\beta))\cdot q_i.\]
By (QK3) we then get that there exists $\beta'\in R$ such that 
\begin{equation}
\label{EquD}
q_2+\beta'\cdot r=q_2+\sum_{i=3}^{n}\beta'\cdot((\alpha+_{q_i}\beta)-_{q_i}(\alpha+_{q_1}\beta))\cdot q_i\in \overline{R\cdot\{v\}}.
\end{equation}
We then have: 
\begin{align*}
v&=q_1+\left(q_2+\sum_{i=3}^{n}\beta'\cdot((\alpha+_{q_i}\beta)-_{q_i}(\alpha+_{q_1}\beta))\cdot q_i\right)-\sum_{i=3}^{n}\beta'\cdot((\alpha+_{q_i}\beta)-_{q_i}(\alpha+_{q_1}\beta))\cdot q_i+\sum_{i=3}^{n}q_i\\
&=q_1+(q_2+\beta'\cdot r)+\sum_{i=3}^{n} (1-_{q_i}\beta'\cdot((\alpha+_{q_i}\beta)-_{q_i}(\alpha+_{q_1}\beta)))\cdot q_i.
\end{align*}
This means that \begin{equation}\label{EquE}v'=v-(q_2+\beta'\cdot r)\end{equation} can be expressed as a sum of $n-1$ terms from $Q(V)$. By the induction assumption, we have the following (where we write $R$ for $M_R$): \[v'\in \overline{\overline{R\cdot \{v'\}} \cap Q(V)}.\]
Note however that $v'\in\overline{R\cdot\{v\}}$, by (\ref{EquD}) and (\ref{EquE}). So \[ \overline{\overline{R\cdot \{v'\}} \cap Q(V)}\subseteq \overline{\overline{R\cdot \{v\}} \cap Q(V)},\]
which gives us that 
\[v'\in \overline{\overline{R\cdot \{v\}} \cap Q(V)}.\]
Since $v''=q_2+\beta'\cdot r$ is a sum of $n-1$ elements of $Q(V)$, by the induction assumption, 
\[v''\in \overline{\overline{R\cdot \{v''\}}\cap Q(V)}.\]
Similarly as for $v'$, we then have
\[v''\in \overline{\overline{R\cdot \{v\}} \cap Q(V)}.\]
Since $v=v'+v''$, we get (\ref{EquC}), as desired.  
\end{proof}

\end{document}